\DeclareMathOperator\Corr{Corr}
\DeclareMathOperator\Dimh{Dim_{{\rm H}}}
\renewcommand{\P}{\mathds{P}}
\newcommand{\E}{\mathds{E}}
\newcommand{\R}{\mathds{R}}
\newcommand{\Z}{\mathds{Z}}
\renewcommand{\H}{\mathcal{H}}
\newcommand{\vj}{\Vec{j}}
\newtheorem{stat}{Statement}[section]
\newtheorem{proposition}[stat]{Proposition}
\newtheorem{theorem}[stat]{Theorem}
\newtheorem{lemma}[stat]{Lemma}
\theoremstyle{definition}
\newtheorem{definition}[stat]{Definition}
\newtheorem{remark}[stat]{Remark}
\numberwithin{equation}{section}
\begin{document}

\title{Macroscopic multi-fractality of Gaussian random fields and linear SPDEs with colored noise %
	\thanks{Research supported by the NRF (National Research Foundation of Korea) grants 2019R1A5A1028324 and
2020R1A2C4002077.}
}
\author{
	Jaeyun Yi\\POSTECH
}

\date{\today}

\maketitle

\begin{abstract}
We consider the linear stochastic heat and wave equations with generalized Gaussian noise that is white in time and spatially correlated. Under the assumption that the homogeneous spatial correlation $f$ satisfies some mild conditions, we show that the solutions to the linear stochastic heat and wave equations exhibit tall peaks in macroscopic scales, which means they are macroscopically multi-fractal. We compute the macroscopic Hausdorff dimension of the peaks for Gaussian random fields with vanishing correlation and then apply this result to the solution of the linear stochastic heat and wave equations. We also study the spatio-temporal multi-fractality of the linear stochastic heat and wave equations. Our result is an extension of Khoshnevisan, Kim, and Xiao \cite{KKX,KKX2} and Kim \cite{K} to a more general class of the linear stochastic partial differential equations and Gaussian random fields.  \\

\noindent{\it Keywords:} Stochastic heat equations, stochastic wave equations, multi-fractal, macroscopic Hausdorff dimension, Gaussian random field, colored noise\\
	
	\noindent{\it \noindent MSC 2020 subject classification:}
	60H15, 60G15, 60K37, 35R60
	\end{abstract}

\section{Introduction and main results}\label{sec1}
In this paper, we investigate the following linear stochastic heat equation with fractional Laplacian
\begin{equation}\tag{SHE}\label{SHE}
     \begin{cases} \frac{\partial}{\partial t }Z^H(t,x) =-\left(-\Delta \right)^{\alpha/2} Z^H(t,x) + \dot{F}(t,x), \quad t>0, x\in \R^d,\\
   Z^H(0,x) = 0, \quad x\in \R^d,
   \end{cases}
\end{equation}
where $\dot{F}$ is a white in time and spatially correlated (or colored) noise. In other words, $\dot{F}$ is a spatially homogeneous generalized Gaussian random field on $\R_+ \times \R^d $ whose mean is zero and covariance is given by 
\begin{equation}\label{covF}
    \E \left[  \dot{F}(t,x)\dot{F}(s,y) \right] = \delta(t-s)f( x-y), 
\end{equation}
where $f$ is a \textit{correlation measure}, i.e., a nonnegative and nonnegative definite tempered Borel measure on $\R^d$. $-\left(-\Delta \right)^{\alpha/2}$ denotes the fractional Laplacian of order $\alpha\in (0,2]$ acting on the space variable in $\R^d$. In probabilistic terminology,  $-\left(-\Delta \right)^{\alpha/2}$ is the infinitesimal generator of an isotropic $\alpha$-stable process $\{X_t\}_{t\geq0}$ in $\R^d$ whose Fourier transform is normalized so that 
$$
\E\left[ \exp \left(i \xi \cdot X_t\right) \right] = \exp \left(-t \lVert \xi \rVert^\alpha \right).
$$
We note that $-\left(-\Delta \right)^{\alpha/2}$ is $\Delta$, Laplacian, when $\alpha=2$. 

Let $\hat{f}$ be the Fourier transform of $f$, that is, $\hat{f}(\xi) := \int_{\R^d} f(x)\exp(-i \xi \cdot x) dx $. By the theory of Bochner, $\hat{f}$ is a nonnegative tempered Borel measure. Dalang \cite{Dalang} showed that with the condition
\begin{equation}\label{dalang}
      \int_{\R^d} \frac{\hat{f}(d\xi)}{1+\lVert \xi \rVert^\alpha} < \infty,
\end{equation}
the linear stochastic heat equation \eqref{SHE} has the stationary \textit{random field solution}
\begin{equation}
    Z^H(t,x) = \int_0^t\int_{\R^d} p^H_{t-s}(x-y) \, F(dsdy), \quad t>0,x\in \R^d, 
\end{equation}
where $p^H_t(x)$ is the transition density of the isotropic $\alpha$-stable process $X_t$. Moreover, it is known that the following reinforced condition 
 \begin{equation}\label{contiH}
     \int_{\R^d} \frac{\hat{f}(d\xi)}{(1+\lVert \xi \rVert^\alpha)^\eta }  < \infty, \quad \text{for some }\eta \in [0,1). 
\end{equation}
implies the almost sure H\"older continuity of $\{ Z^H(t,x) \} _{t\geq 0, x \in \R^d}$ (see \cite{BEM}). To avoid the triviality, throughout this paper, we always assume that
\begin{equation}\label{nondegeneracy}
    f(\R^d)>0,
\end{equation}
without further mention (see the proof of Theorem \ref{thmheat} below).

We are interested in the \textit{macroscopic multi-fractal} property of the solution $\{Z^H(t,x) \}_{t\geq 0, x\in \R^d}$. Khoshnevisan, Kim, and Xiao \cite{KKX} introduced a mathematical definition of the macroscopic multi-fractality (see \cite[Definition 1.1]{KKX}). Among other things, they showed that the solution to \eqref{SHE} with the correlation measure $f= \delta$\footnote{Dirac measure at $0$.} (i.e., $\dot{F}$ is space-time white noise) is multi-fractal in macroscopic  scales. To be precise, let us consider the random set
\begin{equation} \label{setheat}
    \mathcal{H}_t(\gamma) : = \left\{x\in \R^d : \lVert x \rVert > e, Z^H(t,x) \geq \sqrt {2\gamma  v^H(t) \log \lVert x \rVert  }  \right\},
\end{equation}
where $\gamma>0$ is a fixed constant and $v^H(t)$ is the variance $\E[Z^H(t,x)^2]$ for $t>0$, which is independent of $x$ due to the stationarity. We can regard the set $\mathcal{H}_t(\gamma)$ as a set of spatial tall peaks of the solution $Z^H(t,x)$ at each fixed $t>0$. $\gamma$ is a \textit{scale parameter} which scales the heights of the peaks given by the \textit{gauge function} $\sqrt{ \log \lVert x \rVert }$ (see \cite[Introduction]{KKX}). Let $\Dimh$ denote the macroscopic Hausdorff dimension introduced by Barlow and Taylor \cite{BT1,BT2}. Khoshnevisan et al. \cite{KKX} proved that, in the case of $\alpha=2$, 
\begin{equation}
    \Dimh ( \mathcal{H}_t(\gamma) ) = (1-\gamma) \vee 0.\footnote{For all reals $a$ and $b$, $a\vee b$ denotes $\max\{a,b\}$. Similarly, $a \wedge b := \min \{a,b\}$.}
\end{equation}
This implies that for infinitely many different length scales $\gamma$, the set $ \mathcal{H}_t(\gamma)$ has nontrivial distinct macroscopic Hausdorff dimensions. Hence, we can say that the linear stochastic heat equation \eqref{SHE} is (macroscopically) multi-fractal. In a recent paper \cite{K}, Kim extended the above result to the case of the linear stochastic heat equation with fractional Laplacian and colored noise of Riesz type, i.e., $\dot{F}$ has the covariance \eqref{covF} with the spatial correlation $f$ given by Riesz kernel \begin{equation}\label{Riesz}
    f(x):= c\lVert x \rVert ^{-\beta},\quad 0<\beta < \alpha \wedge d, 
\end{equation} for some constant $c=c(\beta,d)$. In this case, $\Dimh ( \mathcal{H}_t(\gamma) ) = (d-\gamma) \vee 0$ holds almost surely, with $v^H(t) = c_{\alpha,\beta,d} t^{(\alpha-\beta)/\alpha}$ for some constant $ c_{\alpha,\beta,d} >0$.

Our main objective is to extend the results of \cite{KKX} and \cite{K} in the previous paragraph to a more general class of linear stochastic partial differential equations. We show that the linear stochastic heat and wave equations with colored noise $\dot{F}$ whose corresponding correlation measure $f$ vanishes at infinity, i.e., $\lim_{\lVert x \rVert\rightarrow \infty}f( x ) = 0$, are macroscopically multi-fractal if $f$ is a genuine function. In order to deal with the case that $f$ is a measure, we also present some suitable conditions (see the conditions (ii) and (iii) in Theorem \ref{thmheat} below) for $\hat{f}$. 

Our result covers the cases of $f=\delta$, the Riesz kernel \eqref{Riesz}, and $f(x) = O(1/\log \lVert x \rVert)$. We now state the main results of this paper. 
\begin{theorem}\label{thmheat}
 Let $f$ be a correlation measure which satisfies \eqref{dalang} and \eqref{contiH}. Suppose that $f$ satisfies one of the following conditions:
 \begin{itemize}
    \item[(i)]f is a function such that  $\lim_{\lVert x \rVert \rightarrow\infty} f(x) = 0$
 \item[(ii)]$\hat{f}$ is a function, i.e., f has a spectral density $\hat{f}(\xi)$,
 \item[(iii)] $\hat{f}$ satisfies \begin{equation}
             \lim_{\lVert z \rVert \rightarrow\infty}  \int_{\R^d}\frac{e^{i \xi \cdot z} \hat{f}(d\xi) }{1+ \lVert \xi \rVert ^\alpha} =0.
 \end{equation}
 \end{itemize}
Then, for each $t>0$, we have 
\begin{equation}\label{dimh}
    \Dimh{(\mathcal{H}_t(\gamma))} = (d-\gamma) \vee 0, \qquad \text{a.s.}
\end{equation}
\end{theorem}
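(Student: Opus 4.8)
The plan is to deduce Theorem~\ref{thmheat} from a general principle about stationary Gaussian fields and then verify its hypotheses for $Z^H(t,\cdot)$. Concretely, I would first establish the following \emph{Gaussian field principle}: if $\{G(x)\}_{x\in\R^d}$ is an almost surely continuous, centered, stationary Gaussian field with variance $\sigma^2>0$ whose spatial covariance $C(z):=\Cov(G(0),G(z))$ satisfies $C(z)\to0$ as $\|z\|\to\infty$, then almost surely $\Dimh\{x:\|x\|>\e,\,G(x)\ge\sqrt{2\gamma\sigma^2\log\|x\|}\}=(d-\gamma)\vee0$. Granting this, Theorem~\ref{thmheat} follows once I check that, for each fixed $t>0$, the field $x\mapsto Z^H(t,x)$ is centered and stationary (given), almost surely continuous (this is exactly the role of \eqref{contiH} via \cite{BEM}), has variance $v^H(t)=\int_{\R^d}\Phi_t(\xi)\,\hat f(d\xi)\in(0,\infty)$ with $\Phi_t(\xi):=\tfrac{1-\e^{-2t\|\xi\|^\alpha}}{2\|\xi\|^\alpha}$ (finiteness from \eqref{dalang} since $\Phi_t(\xi)\lesssim(1+\|\xi\|^\alpha)^{-1}$, positivity from \eqref{nondegeneracy} as $\Phi_t>0$ and $\hat f\not\equiv0$), and --- the only delicate hypothesis --- vanishing covariance at infinity.

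The spatial covariance is
\[
C_t(z):=\Cov\!\big(Z^H(t,x),Z^H(t,x+z)\big)=\int_{\R^d}\e^{i\xi\cdot z}\,\Phi_t(\xi)\,\hat f(d\xi),
\]
and I must show $C_t(z)\to0$ as $\|z\|\to\infty$ under each of (i)--(iii). Under (ii), \eqref{dalang} gives $\Phi_t\hat f\in L^1(\R^d)$ (using $\Phi_t(\xi)\le\min\{t,(2\|\xi\|^\alpha)^{-1}\}$), so $C_t$ is the Fourier transform of an integrable function and Riemann--Lebesgue yields the claim. Under (iii), I would factor $\Phi_t(\xi)=g_t(\xi)\,(1+\|\xi\|^\alpha)^{-1}$ with $g_t(\xi)=(1+\|\xi\|^\alpha)\Phi_t(\xi)=\Phi_t(\xi)+\tfrac12-\tfrac12\e^{-2t\|\xi\|^\alpha}$; since $g_t$ is the Fourier transform of the finite signed measure $\mu_t=\int_0^t p^H_{2s}\,ds+\tfrac12\delta_0-\tfrac12 p^H_{2t}$, Fubini gives $C_t(z)=\int_{\R^d}G_0(z+w)\,\mu_t(dw)$, where $G_0(z)=\int_{\R^d}\e^{i\xi\cdot z}(1+\|\xi\|^\alpha)^{-1}\hat f(d\xi)\to0$ by hypothesis; as $G_0$ is bounded and $\mu_t$ finite, dominated convergence finishes. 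Under (i), I would instead use the time-domain form $C_t(z)=\int_0^t\!\!\int\!\!\int p^H_{s}(u)\,p^H_{s}(u')\,f(z-u+u')\,du\,du'\,ds$; since $|f|\le f(0)$ by nonnegative definiteness and $f(z-u+u')\to0$ pointwise, dominated convergence with dominating function $t\,f(0)$ gives $C_t(z)\to0$.

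It remains to prove the Gaussian field principle, which I would do shell by shell on the Barlow--Taylor shells $\mathcal{S}_n=\{x:\e^{n-1}\le\|x\|<\e^n\}$. The upper bound $\Dimh\le(d-\gamma)\vee0$ is a first-moment estimate: cover each $\mathcal{S}_n$ by $O(\e^{nd})$ unit boxes $Q$ and bound $\P[\sup_{x\in Q}G(x)\ge\sqrt{2\gamma\sigma^2(n-1)}]$ by Borell--TIS (equivalently, a net-plus-continuity union bound), obtaining $\e^{-\gamma n+o(n)}$ because $\E\sup_Q G=O(1)$ by stationarity and the subexponential linear correction is negligible. Weighting by the gauge $\e^{-n\rho}$ and summing gives $\E[\nu^n_\rho]\lesssim\e^{n(d-\gamma-\rho)+o(n)}$, summable for $\rho>d-\gamma$; hence $\nu_\rho<\infty$ a.s. and $\Dimh\le(d-\gamma)\vee0$. (When $\gamma\ge d$ the same bound shows peaks meet only finitely many shells a.s., forcing dimension $0$.)

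The lower bound $\Dimh\ge d-\gamma$ for $0<\gamma<d$ is the heart of the matter and the only place the decorrelation is used. Fixing $\rho<d-\gamma$, I would place in each shell a sparse grid of spacing $R_n=\e^{\epsilon n}$ with $\epsilon>0$ chosen so that $d-\gamma-\epsilon d>\rho$, giving $\asymp\e^{nd(1-\epsilon)}$ points with pairwise correlations at most $g(R_n):=\sup_{\|z\|\ge R_n}|C(z)|$. The governing heuristic is a separation of scales: the logarithmic gauge forces a Gaussian threshold $u_n\asymp\sqrt n$, so two points at correlation $r$ have joint exceedance probability at most $\asymp\e^{2\gamma n\,r}$ times the product of the marginals, and the content of the lower bound is precisely that $n\,g(R_n)=O(1)$ can be arranged while $\gg\e^{n\rho}$ grid points survive. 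A second-moment (Paley--Zygmund) computation then shows that with probability bounded below each such shell carries $\gtrsim\e^{n(d-\gamma-\epsilon d)+o(n)}$ peaks on the grid, while a complementary union bound controls their clustering; the normalized counting measure on these peaks therefore obeys a Frostman bound $\mu(Q)\lesssim(\operatorname{side}(Q)/\e^n)^\rho$ over all boxes of side in $[1,\e^n]$, giving $\nu^n_\rho\ge c>0$. A Borel--Cantelli / zero--one law argument, using the asymptotic independence of well-separated shells together with the fact that $\Dimh(\mathcal{H}_t(\gamma))$ depends only on the field near infinity and is hence a.s.\ constant, upgrades this positive probability to almost sure, so $\nu_\rho=\sum_n\nu^n_\rho=\infty$ and $\Dimh\ge d-\gamma$. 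The principal obstacle is exactly this step: making the second moment (equivalently, the near-diagonal of an energy integral) robust when only \emph{qualitative} decorrelation is available --- the highly correlated close pairs must be excised at a scale $R_n=\e^{o(n)}$ at which the correlation is already $O(1/n)$, which is feasible precisely because the covariance of the cases of interest (the Riesz kernel, and $f=O(1/\log\|x\|)$) decays at least logarithmically; pushing the scale-separation so that it still leaves enough surviving peaks is the quantitative crux of the whole argument.
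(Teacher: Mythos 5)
Your overall architecture is the same as the paper's: you reduce Theorem \ref{thmheat} to a general principle for stationary Gaussian fields with correlation vanishing at infinity (this is precisely the paper's Theorem \ref{thmGaussian}), you verify continuity, stationarity, positive finite variance and vanishing covariance for $x\mapsto Z^H(t,x)$, and you prove the upper bound by a Borell--TIS first-moment covering of the shells, exactly as in Section \ref{subsec3.1}. Your covariance verifications under (ii) and (iii) are correct, and in fact more careful than the paper's (the paper disposes of (ii) by noting it implies (iii)). One caveat on case (i): your dominating function $t\,f(0)$ presupposes $f(0)<\infty$, whereas condition (i) allows $f$ unbounded at the origin --- the motivating example, the Riesz kernel \eqref{Riesz}, has $f(0)=\infty$ --- so your dominated-convergence argument as written covers only bounded $f$ (the Riesz kernel is rescued because it has a spectral density, hence falls under (ii)).

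The genuine gap is in the lower bound. Your second-moment/Paley--Zygmund scheme needs, as you yourself observe, that the pairwise correlation at the grid scale $R_n=e^{\epsilon n}$, say $r_n:=\sup_{\lVert z\rVert\geq R_n}|C(z)|$, satisfy $n\,r_n=O(1)$, i.e.\ covariance decay at least like $1/\log\lVert z\rVert$. That is a quantitative hypothesis strictly stronger than what the theorem assumes: conditions (i)--(iii) give vanishing with \emph{no rate} (a covariance decaying like $1/\log\log\lVert z\rVert$ is admissible, and then $n r_n\to\infty$, the pair-correction factor $e^{2\gamma n r_n}$ blows up, the Paley--Zygmund lower bound on the per-shell success probability tends to $0$, and even ``infinitely often with positive probability'' fails --- no zero--one law can upgrade a vanishing probability). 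Naming this the ``quantitative crux'' is not the same as resolving it, and it cannot be resolved inside the second-moment framework, because that framework can never produce failure probabilities small enough to survive a union bound over exponentially many boxes. This is exactly what Lopes' bound (Proposition \ref{propgaussian}) is for, and why the paper uses it: it requires only that all pairwise correlations lie below some \emph{fixed} $\rho_0<1$ --- which qualitative vanishing supplies for all large $n$ at separation $e^{n\delta}$ --- and its exponent $\alpha_0=\alpha_0(\rho_0)$ tends to $\infty$ as $\rho_0\to 0$. Hence each skeleton box fails to contain a peak with probability at most of order $e^{-\alpha_0 nd(\theta-\delta)}$, exponentially small with an arbitrarily large rate; this survives the union bound over the $e^{nd(1-\theta)}$ boxes of the $\theta$-skeleton, Borel--Cantelli gives $\theta$-thickness, and Proposition \ref{propthick} yields $\Dimh\geq d(1-\theta)$ almost surely --- the a.s.\ statement comes out directly, with no need for your additional (and unproven) shell-independence/ergodicity step. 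To repair your proof you must replace the second-moment step by a comparison-type input (Slepian, as used in \cite{Lopes}); as proposed, the argument proves the theorem only under an added decay-rate assumption on the correlation.
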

\begin{remark}
The condition (iii) is an equivalent condition for \eqref{SHE} to be weakly mixing. See Corollary 9.1 of Chen, Khoshevisan, Nualart and Fu \cite{CKNP}. 
One can easily see that the condition (ii) implies (iii) in Theorem \ref{thmheat} using the Riemann-Lebesgue lemma.

\end{remark}

We now consider the linear stochastic wave equation with the same assumptions on $f$ as in Theorem \ref{thmheat}.
    \begin{equation}\tag{SWE}\label{SWE}
   \begin{cases} \frac{\partial^2}{\partial t^2 }Z^W(t,x) =\Delta Z^W(t,x) + \dot{F}(t,x), \quad t>0, x\in \R^d,\\
   Z^W(0,x) = 0,  \quad\frac{\partial}{\partial t }Z^W(0,x) =0 ,\quad x\in \R^d,
   \end{cases}
\end{equation}
where $d \in \{1,2,3\}$. The theory of Dalang  \cite{Dalang} shows that the linear stochastic wave equation \eqref{SWE} has the stationary random field solution 
\begin{equation}
     Z^W(t,x) = \int_0^t\int_{\R^d} p^W_{t-s}(x-y) \, F(dsdy), \quad t>0, x\in \R^d, 
\end{equation}
where $p^W_t(x)$ is Green's function of the wave operator $\frac{\partial^2}{\partial t^2 }-\Delta$ (see \cite{minicourse}).
Let us consider the random set 
\begin{equation}\label{setwave}
    \mathcal{W}_t(\gamma) : = \left\{x\in \R^d : \lVert x \rVert > e, Z^W(t,x) \geq \sqrt {2\gamma  v^W(t) \log \lVert x \rVert  }  \right\},
\end{equation}
where $\gamma>0$ and $v^W(t) = \E\left[Z^W(t,x)^2 \right]$.
We can deduce an analogous result to Theorem \ref{thmheat}.
\begin{theorem} \label{thmwave}
 Let $f$ be a correlation measure which satisfies \eqref{dalang} and \eqref{contiH}. Suppose that $f$ satisfies one of the following conditions:
 \begin{itemize}
 \item[(i)] f is a function such that  $\lim_{\lVert x \rVert \rightarrow\infty} f(x) = 0$,
 \item[(ii)] $\hat{f}$ is a function, i.e., f has a spectral density,
 \item[(iii)] $\hat{f}$ satisfies \begin{align}
        \lim_{\lVert z \rVert \rightarrow\infty}  \int_{\R^d}\frac{e^{i \xi \cdot z} \hat{f}(d\xi) }{1+ \lVert \xi \rVert ^2} =0  
     \end{align}
 \end{itemize}
 Then, for each $t>0$, we have 
 \begin{equation}\label{dimw}
    \Dimh{(\mathcal{W}_t(\gamma))} = (d-\gamma) \vee 0, \qquad \text{a.s.}
\end{equation}
\end{theorem}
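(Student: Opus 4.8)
The strategy is to reduce Theorem~\ref{thmwave} to the general statement on the macroscopic Hausdorff dimension of the peaks of a stationary Gaussian random field with vanishing correlation, exactly as in the proof of Theorem~\ref{thmheat}; the only genuinely new input is the verification of the vanishing correlation for the wave solution. For fixed $t>0$ the field $\{Z^W(t,x)\}_{x\in\R^d}$ is centered, Gaussian, and stationary; condition~\eqref{contiH} (with $\alpha=2$, since the wave equation involves the ordinary Laplacian) provides an a.s.\ continuous modification, and the variance $v^W(t)=\E[Z^W(t,x)^2]$ is a finite constant, positive by \eqref{nondegeneracy}. Granting the general Gaussian-field theorem, whose sole non-automatic hypothesis here is that the correlation decays at infinity, the whole proof reduces to showing that
\[
\rho^W(t,z):=\frac{C^W(t,z)}{v^W(t)}\longrightarrow 0 \quad(\lVert z\rVert\to\infty), \qquad C^W(t,z):=\E\!\left[Z^W(t,x)Z^W(t,x+z)\right].
\]

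I would next record two representations of the covariance. Using $\widehat{p^W_\tau}(\xi)=\sin(\tau\lVert\xi\rVert)/\lVert\xi\rVert$ and \eqref{covF}, and carrying out the time integral,
\[
C^W(t,z)=\int_{\R^d}\Phi_t(\xi)\,e^{-i\xi\cdot z}\,\hat f(d\xi), \qquad \Phi_t(\xi)=\frac{1}{\lVert\xi\rVert^2}\left(\frac{t}{2}-\frac{\sin(2t\lVert\xi\rVert)}{4\lVert\xi\rVert}\right),
\]
where $\Phi_t$ is nonnegative, bounded (its apparent singularity cancels, $\Phi_t(\xi)\to t^3/3$ as $\xi\to0$) and $O\big((1+\lVert\xi\rVert^2)^{-1}\big)$, so that $v^W(t)=\int\Phi_t\,d\hat f<\infty$ by \eqref{dalang}. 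The crucial structural fact is that $\Phi_t=\widehat{\Theta_t}$, where $\Theta_t:=\int_0^t p^W_{t-s}*p^W_{t-s}\,ds$ is a \emph{finite positive measure, supported in} $\{\lVert w\rVert\le 2t\}$, of total mass $t^3/3$; consequently $\mu^W_t(d\xi):=\Phi_t(\xi)\,\hat f(d\xi)$ is precisely the (finite) spectral measure of $Z^W(t,\cdot)$, and $\rho^W(t,z)\to0$ is equivalent to $\mu^W_t$ being Rajchman, i.e.\ to $\widehat{\mu^W_t}$ vanishing at infinity.

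With this in hand I would dispatch the three conditions. Under condition~(i), the representation $C^W(t,z)=(\Theta_t*f)(z)$ (up to normalization) together with the compact support of $\Theta_t$ shows that only values $f(y)$ with $\lVert y\rVert\ge\lVert z\rVert-2t$ contribute, whence $0\le C^W(t,z)\lesssim\sup_{\lVert y\rVert\ge\lVert z\rVert-2t}f(y)\to0$; this is the finite-propagation-speed phenomenon, and is the cleanest route specific to the wave equation. Under condition~(ii), $\Phi_t\hat f\in L^1(\R^d)$ and the Riemann--Lebesgue lemma applies directly. Under condition~(iii), one writes $\Phi_t(\xi)=m_t(\xi)(1+\lVert\xi\rVert^2)^{-1}$ with $m_t:=(1+\lVert\xi\rVert^2)\Phi_t$ bounded, and sets $\sigma(d\xi):=(1+\lVert\xi\rVert^2)^{-1}\hat f(d\xi)$, a finite measure that is Rajchman precisely by condition~(iii). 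If $m_t=\widehat{\lambda_t}$ for a finite signed measure $\lambda_t$, then Fubini gives
\[
\widehat{\mu^W_t}(z)=\int_{\R^d}\widehat{\sigma}(z+w)\,\lambda_t(dw),
\]
and bounded convergence forces $\widehat{\mu^W_t}(z)\to0$, i.e.\ $\rho^W(t,z)\to0$.

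The main obstacle is therefore the membership $m_t=(1+\lVert\xi\rVert^2)\Phi_t\in B(\R^d)$ in the treatment of condition~(iii); equivalently, that $(1-\Delta)\Theta_t$ is a finite signed measure. This is where the oscillation of the wave weight---absent in the monotone heat weight $(1-e^{-2t\lVert\xi\rVert^\alpha})/(2\lVert\xi\rVert^\alpha)$ of Theorem~\ref{thmheat}---enters, since $\Phi_t(\xi)\sim t/(2\lVert\xi\rVert^2)$ only after subtracting the oscillatory tail $\sin(2t\lVert\xi\rVert)/(4\lVert\xi\rVert^3)$. I expect to settle it from the explicit form of $\Theta_t$ in each dimension $d\in\{1,2,3\}$: $\Theta_t$ is compactly supported and piecewise smooth, so that $\Delta\Theta_t$ produces only bounded densities together with surface measures along the kinks, hence a finite measure. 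Once $\rho^W(t,z)\to0$ is established under each of (i)--(iii), feeding it into the general Gaussian-field theorem yields \eqref{dimw}.
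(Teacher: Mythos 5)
Your proposal is correct, and its skeleton is the same as the paper's: for fixed $t>0$ one normalizes $Z^W(t,\cdot)$ to a continuous stationary standard Gaussian field and feeds the decay of the spatial correlation into Theorem \ref{thmGaussian}, the upper bound being automatic. The differences lie in how the decay $\Corr\left(Z^W(t,x),Z^W(t,x+z)\right)\to 0$ is verified. For (i), the paper applies dominated convergence to the convolution form of the covariance (the second line of the wave analogue of \eqref{eq_cov}); your finite-propagation-speed argument is the same estimate made geometric, and is if anything cleaner. For (ii)--(iii), the paper simply writes $\E\left[Z^W(t,x)Z^W(t,y)\right]\le C(t)\int_{\R^d}(1+\lVert\xi\rVert^2)^{-1}e^{i(x-y)\cdot\xi}\hat f(d\xi)$ and says the heat argument goes through with minor modification; taken literally, this pointwise domination of one oscillatory integral by another is not valid (the bound $\Phi_t(\xi)\le C(t)(1+\lVert\xi\rVert^2)^{-1}$ does not survive multiplication by $\cos(z\cdot\xi)$, which changes sign), and what it really encodes is exactly the transfer problem you isolate --- the paper is implicitly leaning on the weak-mixing equivalence cited from \cite{CKNP}. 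Your multiplier argument (factor $\Phi_t(\xi)=m_t(\xi)(1+\lVert\xi\rVert^2)^{-1}$, show $m_t$ is the Fourier--Stieltjes transform of a finite signed measure $\lambda_t$, then Fubini plus dominated convergence against the Rajchman measure $\sigma$) is therefore a genuinely different, and more rigorous, route. Moreover, the step you leave as ``expected'' closes in one line, with no dimension-by-dimension analysis of $\Theta_t$: since $\widehat{p^W_{2t}}(\xi)=\sin(2t\lVert\xi\rVert)/\lVert\xi\rVert$ and $p^W_{2t}$ is a finite positive measure in every $d\in\{1,2,3\}$, the identity
\begin{equation*}
\lVert\xi\rVert^2\Phi_t(\xi)\;=\;\frac{t}{2}-\frac14\,\widehat{p^W_{2t}}(\xi)
\end{equation*}
says that $-\Delta\Theta_t=\frac{t}{2}\delta_0-\frac14 p^W_{2t}$ as tempered distributions, whence $(1-\Delta)\Theta_t=\Theta_t+\frac{t}{2}\delta_0-\frac14 p^W_{2t}$ is a finite signed measure of total variation at most $t^3/3+t$, and $m_t$ is indeed a Fourier--Stieltjes transform. (Incidentally, this shows $\Delta\Theta_t$ carries an atom at the origin, not merely bounded densities and surface measures along kinks as you guessed; the conclusion is unaffected.) In short: same reduction to Theorem \ref{thmGaussian} and same treatment of (i)--(ii), but your handling of (iii) repairs a step the paper glosses over, at the modest cost of the Fourier--Stieltjes verification above.
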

We point out that the proofs of Theorem \ref{thmheat} and \ref{thmwave} use a different method from the one used in \cite{KKX} and \cite{K}. For the lower bound of macroscopic Hausdorff dimension, the authors therein used a localization argument (Section $6$ in \cite{KKX}) or Berman's theorem (Section $3$ in \cite{K}) to construct independent random variables which are close to the original solutions. But these methods can be applied only to the case of $f=\delta$ or polynomially decaying noise (e.g., \eqref{Riesz}). Our proof strongly utilizes the fact that the solutions of \eqref{SHE} and \eqref{SWE} are Gaussian random fields (see the proofs of Theorems \ref{thmheat} and \ref{thmwave} below). Using this fact, we employ the result of Lopes \cite{Lopes} that gives an estimate for the lower tail probability of maximum of dependent Gaussian random variables.  Our result below (Theorem \ref{thmGaussian}) shows that if the correlation of a Gaussian random field vanishes at infinity, then it behaves like the infinite sum of independent random variables, which develops spatial tall peaks as a multi-fractal. Now we state the theorem for multi-fractality of Gaussian random fields with vanishing correlation. As we will see later (see Section \ref{sec4}), Theorem \ref{thmheat} and \ref{thmwave} can be obtained as corollaries of Theorem \ref{thmGaussian}.


\begin{theorem}\label{thmGaussian}
Let $\{Z(t) \}_{t\in \R^d } $ be a continuous stationary Gaussian random field with mean zero and variance one. Suppose that the correlation $\Corr \left( Z(t) , Z(s)\right)=: \rho ( \lVert t-s \rVert  ) \rightarrow 0 $ monotonically as $\lVert t-s \rVert \rightarrow \infty$. Let us define the random set
\begin{equation}\label{setGaussian}
    \mathcal{Z}(\gamma) := \left\{ t\in \R^d  : \lVert t \rVert \geq e , Z(t) \geq  \sqrt{2\gamma \log \lVert t\rVert }\right\},
\end{equation} for every $\gamma>0$.
Then we have 
\begin{equation} \label{dimgaussian}
    \Dimh \left(\mathcal{Z}(\gamma) \right) = (d - \gamma) \vee 0, \qquad \text{a.s.,}
\end{equation}
for all $\gamma>0$.
\end{theorem}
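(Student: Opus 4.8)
The plan is to compute the two matching bounds for $\Dimh(\mathcal{Z}(\gamma))$ separately within the Barlow--Taylor framework. Recall that with the dyadic shells $\cS_n := \{x\in\R^d : 2^n \le \lVert x\rVert < 2^{n+1}\}$ one sets $\nu^n_\rho(E)$ to be the minimal value of $\sum_i (\mathrm{side}(Q_i)/2^n)^\rho$ over dyadic-cube covers of $E\cap \cS_n$, and $\Dimh(E) = \inf\{\rho\ge0 : \sum_n \nu^n_\rho(E) < \infty\}$. Since $\Dimh$ is always nonnegative, the claimed value $(d-\gamma)\vee 0$ forces me to prove the upper bound $\Dimh \le (d-\gamma)\vee0$ for every $\gamma>0$ and the lower bound $\Dimh \ge d-\gamma$ only in the regime $\gamma<d$.

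For the upper bound I would run a first-moment (covering) argument that uses only stationarity, Gaussianity, and unit variance. The standard Gaussian tail gives $\P(Z(t)\ge \sqrt{2\gamma\log\lVert t\rVert}) \le \lVert t\rVert^{-\gamma}$, and a maximal inequality (Borell--TIS, using that $m:=\E[\sup_{Q}Z]$ is a finite constant by stationarity and continuity) upgrades this to a bound $\lesssim 2^{-\gamma n} e^{m\sqrt{2\gamma n\log 2}}$ for the probability that a given unit cube $Q\subset\cS_n$ meets $\mathcal{Z}(\gamma)$. Covering $\cS_n$ by its $\asymp 2^{nd}$ unit cubes then yields $\E[\nu^n_\rho(\mathcal{Z}(\gamma))] \lesssim 2^{n(d-\rho-\gamma)}e^{m\sqrt{2\gamma n \log2}}$, whose sum over $n$ converges for every $\rho > (d-\gamma)\vee 0$. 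By Tonelli the sum $\sum_n \nu^n_\rho$ is then a.s.\ finite, so $\Dimh(\mathcal{Z}(\gamma)) \le \rho$; letting $\rho \downarrow (d-\gamma)\vee0$ finishes this half.

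The lower bound (for $\gamma<d$) is the substantive part, and I would obtain it from the macroscopic mass-distribution principle: it suffices to exhibit, for infinitely many $n$ and with probability one, a measure $\mu_n$ carried by $\mathcal{Z}(\gamma)\cap\cS_n$ with $\mu_n(\cS_n)\gtrsim 1$ and $\mu_n(Q)\lesssim (\mathrm{side}(Q)/2^n)^\beta$ for all dyadic cubes $Q$, where $\beta<d-\gamma$ is arbitrary; this forces $\nu^n_\beta(\mathcal{Z}(\gamma))\gtrsim1$ infinitely often, hence $\sum_n\nu^n_\beta=\infty$ and $\Dimh\ge\beta$. To build $\mu_n$ I would tile $\cS_n$ into sub-cubes of side $2^{m_n}$ with $m_n \approx n\gamma/d$, call a sub-cube \emph{occupied} if some point of a fixed unit-spaced grid inside it exceeds the level (such a point then lies in $\mathcal{Z}(\gamma)$), and spread mass $2^{-(n-m_n)d}$ over the occupied sub-cubes. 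A direct computation shows that, as long as a constant fraction of the sub-cubes is occupied, the total mass is $\gtrsim1$ while the Frostman bound holds with exponent up to $d-\gamma$: for $\mathrm{side}(Q)=2^j$ one checks $\mu_n(Q)\lesssim (2^{j-n})^d \le (2^{j-n})^\beta$ when $j\ge m_n$, and $\mu_n(Q)\le 2^{-(n-m_n)d}\le (2^{j-n})^\beta$ when $j< m_n$, using $\beta<d-\gamma$ and $m_nd\approx n\gamma$.

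Everything thus reduces to the probabilistic claim that, almost surely for all large $n$, a positive fraction of the side-$2^{m_n}$ sub-cubes is occupied. The occupancy probability of a single sub-cube is $\P(\max_{\mathrm{grid}} Z \ge u_n)$ with $u_n\approx\sqrt{2\gamma n\log2}$ and $\asymp 2^{m_nd}\gg 2^{n\gamma}$ grid points. Here the monotone decay $\rho(\lVert\cdot\rVert)\downarrow0$ is essential: it first guarantees $\rho\ge0$, so Slepian's inequality runs the unfavorable way and cannot be used directly, and this is exactly the gap filled by Lopes' estimate \cite{Lopes} for the lower tail of the maximum of dependent Gaussians, which exploits the decay of $\rho$ to show $\P(\max_{\mathrm{grid}}Z<u_n)\to0$. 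Inserting gaps of slowly growing width between the retained sub-cubes makes their occupancy events nearly independent (cross-cube correlations are $\le\rho(\text{gap})\to0$), so a second-moment/Borel--Cantelli argument upgrades the per-cube statement to ``a constant fraction occupied for all large $n$.'' I expect the main obstacle to be the simultaneous calibration of the three scales --- the tiling exponent $m_n$, the intra-cube grid spacing, and the inter-cube gap --- so that enough effectively independent grid points clear the level $u_n$, Lopes' bound remains quantitatively strong for a possibly very slowly decaying $\rho$, and the cross-cube dependence stays negligible; the monotone vanishing of $\rho$ is precisely what makes all three demands compatible.
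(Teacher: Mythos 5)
Your upper bound is correct and is essentially the paper's own argument (Borell--TIS plus a first-moment covering of each shell by unit cubes), and your device for converting ``many occupied tiles'' into a dimension lower bound --- a per-shell Frostman measure combined with the Barlow--Taylor density theorem --- is a legitimate substitute for the paper's route via $\theta$-thick sets and Proposition~\ref{propthick}; indeed the paper uses exactly that density theorem in its spatio-temporal result. The genuine gap is in the probabilistic core of your lower bound, precisely at the point you yourself flag as ``the main obstacle.''

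You place a \emph{unit-spaced} grid inside each tile of side $2^{m_n}$ and assert that Lopes' estimate ``exploits the decay of $\rho$'' to force $\P(\max_{\mathrm{grid}}Z<u_n)\to0$. It does not: Lopes' theorem (Proposition~\ref{propgaussian}) takes as input a uniform bound $\max_{i\neq j}\Corr(Z(t_i),Z(t_j))\le\rho_0<1$ and controls the maximum of $N$ points only below the level $\sqrt{2\gamma_0(1-\rho_0)\log N}$, with an exponent $\alpha_0$ that degenerates as $\rho_0\uparrow1$. With unit spacing the only bound available is $\rho_0=\rho(1)$, a \emph{fixed} constant which the hypotheses allow to be arbitrarily close to $1$. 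For Lopes' level to reach your target $u_n\approx\sqrt{2\gamma n\log2}$ you then need $m_nd\ge\gamma n/\bigl(\gamma_0(1-\rho(1))\bigr)$, while your Frostman bound with exponent $\beta$ forces $m_nd\le n(d-\beta)$; these are compatible only for $\beta\le d-\gamma/\bigl(\gamma_0(1-\rho(1))\bigr)$. So, as written, your construction proves at best $\Dimh(\mathcal{Z}(\gamma))\ge d-\gamma/(1-\rho(1))$, strictly short of $d-\gamma$, and it proves nothing once $\gamma/(1-\rho(1))\ge d$. (A secondary calibration slip: taking $m_nd\approx\gamma n$ \emph{exactly} puts $u_n$ at the threshold of the maximum of $2^{\gamma n}$ points, where even for i.i.d.\ variables the occupancy probability decays like $n^{-1/2}$, so ``a constant fraction occupied'' fails; one must take $m_nd\ge n(\gamma+\varepsilon)$ and send $\varepsilon\downarrow0$ only at the very end.) The missing idea --- which is the crux of the paper's proof --- is that the vanishing of $\rho$ must enter through the choice of grid \emph{spacing}: the paper takes the intra-tile points from the $\delta$-skeleton, mutually separated by $e^{n\delta}$, so that $\rho_0=\rho(e^{n\delta})$ can be made arbitrarily small for large $n$ (a sufficiently large constant spacing, depending on the target $\beta$, would also do). Then simultaneously $\gamma_0(1-\rho_0)$ is close to $1$ and $\alpha_0$ is large, and Lopes' bound makes the per-tile failure probability so small that a plain union bound over all tiles and shells plus Borel--Cantelli gives \emph{every} tile occupied for all large $n$; your gap-insertion/second-moment step then becomes unnecessary --- which is fortunate, since as proposed it would still leave the nontrivial task of upgrading a positive-probability (Paley--Zygmund-type) statement to an almost-sure infinitely-often one.
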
 


We finish our introduction with the theorems for the \textit{spatio-temporal mutli-fractality} of the linear stochastic heat and wave equations \eqref{SHE}. Khoshnevisan, Kim, and Xiao \cite{KKX2} showed that the solution $Z^H(t,x)$ to \eqref{SHE} with space-time white noise has spatio-temporal tall peaks at infinitely many different scales (see \cite[Theorem 4.1]{KKX2}). We generalize their result into the following directions: We consider the spatio-temporal multi-fractality of \eqref{SHE} and \eqref{SWE} with colored noise in dimension $d\geq1$ (see Theorem \ref{thmsptime} below). Unlike that we only consider the spatial correlation for each fixed $t>0$ in the Theorem \ref{thmheat} and \ref{thmwave}, the situation is more complicated since the solution $Z^H(t,x)$ has a long correlation in time direction, i.e., the correlation $\Corr{\left( Z^H(t,x,) , Z^H(t,x+t^c)\right)}\rightarrow 1$ as $t\rightarrow \infty$ when $c\in (0,1/2)$ (see in \cite[Section 4]{KKX2}). Thus, to catch the tall spatio-temporal peaks, we use a \textit{stretch factor} to the time direction (see Definition \ref{stretch} below and the function $\exp(t^{2\epsilon})$ in \cite[Theorem 4.1]{KKX2} as an example). This manipulation allows us to see tall peaks in an elongated box in $(t,x)$ plane. Obviously, the stretch factor should be chosen taking into account the correlation of the solutions.
\begin{definition}\label{stretch} Let $Z(t,x)$ be the solution to \eqref{SHE} or \eqref{SWE} with a correlation measure $f$ satisfying the conditions in Theorem \ref{thmheat}. Let us set 
\begin{equation}\label{corrcondi}
    \mathcal{K}(t,x) := \Corr\left(Z(t,x), Z(t,0) \right) \quad \text{for all $x \in \R^d$ and $t>0$}. 
\end{equation} We define a {\it stretch factor} of $Z(t,x)$ by a continuous, monotonically increasing function $g:(1,\infty) \rightarrow (1,\infty)$ such that $\lim_{r\rightarrow \infty} g(r) =\infty$ and for any $\epsilon>0$,
\begin{equation}\label{corrcondi2}
    \mathcal{K}\left(g^{-1}(n), e^{n\epsilon}\right) \rightarrow 0 \quad \text{as $n\rightarrow\infty$}.
\end{equation}

\end{definition}
Now we state our last main result about the macroscopic spatio-temporal multi-fractality of \eqref{SHE} and \eqref{SWE}, which generalizes \cite[Theorem 4.1]{KKX2}.
\begin{theorem} \label{thmsptime}
Let $f$ be a correlation measure satisfying the assumptions in Theorem \ref{thmheat}. Suppose that there is a stretch factor $g$ of $Z^H(t,x)$. Then, for the random set 
\begin{equation}\label{set_heatsp}
     \mathcal{H}(\gamma) : = \left\{(e^{g(t)},x)\in (e,\infty) \times \R^d :Z^H(t,x) \geq \sqrt {2\gamma  v^H(t) g(t)  }  \right\}, \quad \text{for every $\gamma>0$},
\end{equation} we have 
\begin{equation}\label{eq_dimspheat}
    \Dimh\left(\mathcal{H}(\gamma)\right) = \left(d+1-\gamma\right) \vee d, \quad \text{a.s.}
\end{equation}

Similarly, if $g$ is a stretch factor of $Z^W(t,x)$, then for the random set 
\begin{equation}\label{set_wavesp}
    \mathcal{W}(\gamma) : = \left\{(e^{g(t)},x)\in (e,\infty)\times \R^d :Z^W(t,x) \geq \sqrt {2\gamma  v^W(t) g(t)  }  \right\},\quad \text{for every $\gamma>0$},
\end{equation} we have 
\begin{equation}\label{eq_dimspwave}
       \Dimh\left(\mathcal{W}(\gamma)\right) = \left(d+1-\gamma\right) \vee d, \quad \text{a.s.}
\end{equation}
\end{theorem}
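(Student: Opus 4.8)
The plan is to first strip away the time–inhomogeneity by reparametrizing. Writing $u=e^{g(t)}$, i.e. $t=g^{-1}(\log u)$, and normalizing
\[
Y(u,x):=Z^H\!\bigl(g^{-1}(\log u),x\bigr)\big/\sqrt{v^H\!\bigl(g^{-1}(\log u)\bigr)},
\]
the field $Y$ is a continuous, mean-zero, variance-one Gaussian field on $(e,\infty)\times\R^d\subset\R^{d+1}$, and the defining inequality of $\mathcal{H}(\gamma)$ becomes $Y(u,x)\ge\sqrt{2\gamma\log u}$. The crucial feature is that the gauge $\sqrt{2\gamma\log u}$ depends only on the first coordinate $u$, not on the full radius $\lVert(u,x)\rVert$. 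This asymmetry is exactly what produces the floor $d$ in $(d+1-\gamma)\vee d$: when $u$ is large the threshold is high and peaks are rare, but when $u$ stays moderate while only $\lVert x\rVert$ is large the threshold stays bounded, so a full $d$-dimensional sheet of peaks always survives.

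\textbf{Upper bound.} I would run the first-moment (Barlow--Taylor covering) argument on the shells $\mathcal{S}_n=\{(u,x):e^n\le\lVert(u,x)\rVert_\infty<e^{n+1}\}$, splitting each shell by whether the time coordinate or the space coordinate dominates. When $u\sim e^n$ dominates, the threshold is $\sqrt{2\gamma n}$, the Gaussian tail gives peak probability $\lesssim e^{-\gamma n}$, and counting the $\sim e^{n(d+1)}$ unit boxes yields $\rho$-content $\lesssim e^{n(d+1-\gamma-\rho)}$. When $\lVert x\rVert\sim e^n$ dominates while $u\sim e^{n'}$ with $n'\le n$, the threshold is only $\sqrt{2\gamma n'}$; summing the geometric series $\sum_{n'}e^{n'(1-\gamma)}$ over the $e^{n'}$ available time-boxes gives, for $\gamma>1$, a contribution $\lesssim e^{n(d-\rho)}$. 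Adding the two, the expected $\nu^n_\rho$-content is summable in $n$ precisely when $\rho>(d+1-\gamma)\vee d$, so Borel--Cantelli yields $\Dimh(\mathcal{H}(\gamma))\le(d+1-\gamma)\vee d$ almost surely.

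\textbf{Lower bound.} This splits into the floor $d$ and the main value $d+1-\gamma$. For the floor I would fix any moderate time $t_0$, set $u_0=e^{g(t_0)}$, and note that the slice $\{x:Y(u_0,x)\ge\sqrt{2\gamma\log u_0}\}$ is a fixed-level excursion set of a stationary Gaussian field with vanishing spatial correlation; applying Theorem \ref{thmGaussian} to $x\mapsto Y(u_0,x)$ with an auxiliary exponent $\gamma'$ and letting $\gamma'\downarrow0$ (the set $\{Y(u_0,x)\ge\sqrt{2\gamma'\log\lVert x\rVert}\}$ agrees with the fixed-level set off a bounded ball) shows this slice has macroscopic dimension $d$, whence $\Dimh(\mathcal{H}(\gamma))\ge d$. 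For the main value, binding when $\gamma\le1$, I would adapt the lower-bound machinery behind Theorem \ref{thmGaussian}: use the decorrelation guaranteed by the stretch factor \eqref{corrcondi2} to partition the $n$-th stretched shell into $\approx e^{nd}$ essentially independent spatial blocks, apply the Lopes lower-tail estimate \cite{Lopes} for the block maxima to produce a $(d-\gamma)$-dimensional spatial peak set at each stretched-time level, and then gain the extra unit of dimension from the time direction, along which either freshly decorrelated peaks or persistent peak ``lines'' sweep out $u\in[e^n,e^{n+1})$. A Frostman-type mass distribution supported on these peaks then gives $\Dimh(\mathcal{H}(\gamma))\ge d+1-\gamma$.

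\textbf{Main obstacle and remaining points.} The heart of the difficulty is the long-range temporal correlation and the time non-stationarity of $Z^H$: unlike in the purely spatial Theorems \ref{thmheat}--\ref{thmwave}, nearby times give nearly perfectly correlated fields, so distinct stretched-time levels cannot be treated as independent. The stretch factor $g$ of Definition \ref{stretch} is introduced precisely to control this---condition \eqref{corrcondi2} forces decorrelation at spatial scale $e^{n\epsilon}$ on the $n$-th shell---and the delicate step is to convert this into a rigorous mass distribution witnessing the extra $+1$ in dimension without over- or under-counting the persistent peaks. The wave case is identical after replacing $v^H,p^H$ by $v^W,p^W$ and invoking Theorem \ref{thmwave} in place of Theorem \ref{thmheat}; finally, since $\gamma\mapsto\mathcal{H}(\gamma)$ is decreasing, a monotonicity argument over rational $\gamma$ upgrades the almost-sure statement to hold simultaneously for all $\gamma>0$.
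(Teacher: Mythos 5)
Your proposal is correct and follows essentially the same route as the paper: the same first-moment covering argument for the upper bound (the paper organizes your time-dominant/space-dominant split instead via a parameter $q>1$ and the block Lemma~\ref{lemblock}, letting $q\downarrow 1$ at the end, but the estimates are the same), and for the lower bound the same two ingredients, namely Lopes' lower-tail estimate (Proposition~\ref{propgaussian}) applied on a spatial $\theta$-skeleton at each integer stretched-time level together with a Frostman-type random measure and the Barlow--Taylor density theorem for the $d+1-\gamma$ part, and the fixed-time slice argument via Theorem~\ref{thmheat} with an auxiliary exponent $\gamma'\downarrow 0$ for the floor $d$. The one step you flag as delicate (handling ``persistent peak lines'' in time) in fact requires no temporal decorrelation at all: the paper simply takes a union bound over the roughly $e^{n}$ integer stretched times and the skeleton boxes in each shell, since Borel--Cantelli needs only summability of the failure probabilities, not independence across time levels.
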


\begin{remark}
We note that a large class of SPDEs satisfies the condition \eqref{corrcondi2}. For example, consider \eqref{SHE} with the correlation measure $f(x) = \lVert x\rVert^{-\beta}$ (i.e., the Riesz kernel \eqref{Riesz}). Then there exists a constant $c>0$ such that $\mathcal{K}(t,x) \leq c t^2 \lVert x \rVert^{-\beta} $  for all large $\lVert x \rVert $ and $t>0$ (see Lemma 3.1 in \cite{K}). Then, for any $\delta>0$, $g(r):=r^\delta $ fits into \eqref{corrcondi2}. Moreover, even if $\mathcal{K}(t,x) \leq c e^{ct} (\log \lVert x\rVert)^{-1}$ for large $\lVert x\rVert $, we can set $g(r):= e^{r}$, which gives a desired stretch factor.
\end{remark}

Now we give an outline of the paper. In Section \ref{sec2}, we present a brief explanation about the macroscopic Hausdorff dimension. In Section \ref{sec3}, we prove Theorem \ref{thmGaussian}. Section \ref{sec4} is devoted to prove the applications of Theorem \ref{thmGaussian}; Theorem \ref{thmheat}, \ref{thmwave} and \ref{thmsptime}.


\section{Macroscopic Hausdorff dimension}\label{sec2}
In this section, we introduce the notion of the macroscopic Hausdorff dimension given by Barlow and Taylor \cite{BT1,BT2}, and Khoshnevisan et al. \cite{KKX}. We also present a useful proposition that gives a lower bound for the macroscopic Hausdorff dimension.

\subsection{Definition}\label{subsec2.1}
For all integers $n\geq 1 $, we define the exponential cubes and shells as follows:
\begin{equation}
    V_n : = [e^{-n}, e^n)^d, \quad S_0 : = V_0, \quad \text{and} \quad S_{n+1}: = V_{n+1} \setminus V_n.
\end{equation}
Let $\mathcal{Q}$ be the collection of all cubes of the form
\begin{equation}
    Q(x,r) := \prod_{i=1}^d [x_i,x_i+r), 
\end{equation}
for $x=(x_1,...,x_d)\in \R^d$, and $r \in [1,\infty)$. For any subset $E\subset \R^d$, $\rho>0$, and all integers $n\geq 1$, we define 
\begin{equation}
    \nu_\rho^n(E):= \inf \left\{\sum_{i=1}^m \left (\frac{s(Q_i)}{e^n} \right)^\rho : Q_i \in \mathcal{Q}, Q_i \subset S_n \text{ and }E\cap S_n \subset \cup_{i=1}^m Q_i  \right\},
\end{equation}
where $s(Q):=r$ denotes the side of $Q=Q(x,r)$. We now introduce the definition of the macroscopic Hausdorff dimension. 
\begin{definition}[Barlow and Taylor \cite{BT1,BT2}]
The {\it macroscopic Hausdorff dimension} of $E \subset \R^d$ is defined as 
\begin{equation}
    \Dimh (E) : = \inf \left\{\rho>0: \sum_{n=1}^\infty \nu_\rho^n (E) < \infty \right\}.
\end{equation}

\end{definition}

\subsection{A lower bound for the macroscopic Hausdorff dimension}\label{subsec2.2}
Choose and fix any $\theta\in (0,1)$. We define 
$$
a_{j,n}(\theta) := e^n + je^{n\theta}, \qquad 0\leq j < e^{n(1-\theta)},
$$
$$
I_{n}(\theta) : = \bigcup_{\substack{0\leq j \leq e^{n(1-\theta)}:\\ j\in \Z}} \{ a_{j,n}(\theta)\},
$$
and
$$
\mathcal{I}_{n}(\theta) : = \prod_{i=1}^{d} I^i_{n}(\theta),
$$
where $I^i_n(\theta)$ is a copy of $I_n(\theta)$ for each $i$. We call $\cup_{n=1}^\infty\mathcal{I}_n(\theta) $ a $\theta$-skeleton of $\R^d$ (see \cite[Definition 4.2]{KKX}). Note that $\Dimh \left( \cup_{n=k}^\infty\mathcal{I}_n(\theta) \right) = d (1-\theta)$ for any integer $k\geq 1 $.
\begin{definition}[Definition 4.3, \cite{KKX}]

$E$ is called $\theta$-thick if there exists a positive integer $k=k(\theta)$ such that 
$$
E\cap Q(x,e^{n\theta}) \neq \emptyset,
$$
for all $x \in \mathcal{I}_n(\theta)$ and $n\geq k$.
\end{definition}
By the monotonicity of the macroscopic Hausdorff dimension, we get the following lower bound.
\begin{proposition}[Proposition 4.4, \cite{KKX}]\label{propthick} 
Let $E \subset \R^d$. If $E$ contains a $\theta$-thick set for some $\theta\in (0,1)$, then 
$$
\Dimh (E) \geq d(1-\theta).
$$
\end{proposition}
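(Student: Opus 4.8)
The plan is to reduce to a single covering estimate on a $\theta$-thick subset and then run a two-regime optimization over the covering cubes. Fix a $\theta$-thick set $A \subseteq E$; by the monotonicity of $\Dimh$ it suffices to prove $\Dimh(A) \geq d(1-\theta)$, and by the definition of $\Dimh$ this amounts to showing that for every $\rho \in (0, d(1-\theta))$ one has $\sum_{n} \nu_\rho^{n}(A) = \infty$. I will in fact establish the stronger statement that $\nu_\rho^{n}(A)$ is bounded below by a positive constant independent of $n$ for all large $n$, which already makes the series diverge.

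First I would fix $n \geq k(\theta)$ and locate the skeleton: since every coordinate of a point of $\mathcal{I}_n(\theta)$ lies in $[e^n, 2e^n) \subset [e^n, e^{n+1})$, we have $\mathcal{I}_n(\theta) \subset S_{n+1}$ (this harmless index shift does not affect the convergence of $\sum_n$). By $\theta$-thickness, for each $x \in \mathcal{I}_n(\theta)$ I may choose a point $p_x \in A \cap Q(x, e^{n\theta})$. The grid cubes $\{Q(x,e^{n\theta})\}_{x \in \mathcal{I}_n(\theta)}$ are pairwise disjoint, of common side $e^{n\theta}$, and their number is $N_n := |\mathcal{I}_n(\theta)| \geq e^{nd(1-\theta)}$.

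The key estimate is combinatorial. Let $\{Q_i\}_{i=1}^m$ be any admissible cover of $A \cap S_{n+1}$, with $Q_i \subset S_{n+1}$ and side $r_i \geq 1$. A cube of side $r_i$ meets at most $(r_i e^{-n\theta} + 2)^d \leq 3^d\big(1 + (r_i e^{-n\theta})^d\big)$ of the disjoint grid cubes $Q(x,e^{n\theta})$, hence contains at most that many of the points $p_x$; since all $N_n$ of them must be covered, $\sum_{i} \big(1 + (r_i e^{-n\theta})^d\big) \geq 3^{-d} N_n$. Consequently at least one of the alternatives holds: either (A) $\sum_i (r_i e^{-n\theta})^d \geq \tfrac12 3^{-d} N_n$, or (B) the number of cubes satisfies $m \geq \tfrac12 3^{-d} N_n$. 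It then remains to show both alternatives force the cost $\sum_i (r_i / e^{n+1})^\rho$ to be bounded below by a positive constant. In regime (A) I would use $\rho < d$ together with $r_i \leq e^{n+1}$: writing $u_i := r_i e^{-n\theta}$, which is at most $U := e^{n(1-\theta)+1}$, the elementary bound $u_i^\rho \geq u_i^d\, U^{\rho-d}$ (valid since $u_i \leq U$ and $\rho - d < 0$) gives $\sum_i u_i^\rho \geq c\, e^{n(1-\theta)\rho}$, and multiplying by $(e^{n\theta}/e^{n+1})^\rho$ cancels the $n$-dependence and leaves a positive constant. In regime (B) each cube contributes at least $(1/e^{n+1})^\rho$, so the cost is at least $m\, e^{-(n+1)\rho} \geq c\, e^{nd(1-\theta)} e^{-(n+1)\rho} = c'\, e^{n(d(1-\theta)-\rho)}$, which for $\rho < d(1-\theta)$ is bounded below (indeed diverges). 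Either way $\nu_\rho^{n+1}(A) \geq c_0 > 0$, whence $\sum_n \nu_\rho^n(A) = \infty$ and therefore $\Dimh(E) \geq \Dimh(A) \geq d(1-\theta)$.

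I expect the main obstacle to be exactly the covering bound of the third paragraph: for a cover whose cubes may have widely varying sizes, one must correctly count how many skeleton cells a single cube can absorb, and then split into the "few large cubes'' and "many small cubes'' regimes so that the competing constraints $\rho < d$ (for the large cubes) and $\rho < d(1-\theta)$ (for the small cubes) each yield the same uniform positive lower bound. Once the dichotomy (A)/(B) is set up, the exponent bookkeeping is routine.
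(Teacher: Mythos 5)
Your proof is correct, and it is worth noting that it is more self-contained than what the paper itself offers: the paper does not prove Proposition \ref{propthick} at all, it cites \cite{KKX}, and the sentence preceding it (``by the monotonicity of the macroscopic Hausdorff dimension'') together with the remark $\Dimh\bigl(\cup_{n\geq k}\mathcal{I}_n(\theta)\bigr)=d(1-\theta)$ alludes to the route taken there: one compares an arbitrary admissible cover of $E\cap S_n$ with an induced cover of the $\theta$-skeleton (inflating each covering cube so that it swallows every skeleton cell it touches) and then invokes the separately computed dimension of the skeleton; note that bare monotonicity cannot suffice, since a $\theta$-thick set need not contain the skeleton. You instead bypass the auxiliary skeleton-dimension computation and the inflation step entirely, proving the uniform lower bound $\nu_\rho^{n}(A)\geq c_0>0$ directly through your dichotomy (A)/(B). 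The common core of both arguments is the same counting estimate --- a cube of side $r$ meets at most $(re^{-n\theta}+2)^d$ of the disjoint skeleton cells --- but your packaging makes explicit the two competing regimes (few large cubes, constrained by $\rho<d$; many small cubes, constrained by $\rho<d(1-\theta)$), both of which are active since $\rho<d(1-\theta)<d$, and this is arguably the cleaner way to see where the exponent $d(1-\theta)$ comes from. Two cosmetic points you should patch: first, a point $p_x$ can have a coordinate as large as $2e^n+e^{n\theta}$, which exceeds $e^{n+1}$ when $n(1-\theta)$ is small, so the inclusion of the cubes $Q(x,e^{n\theta})$ (and hence of the $p_x$) in $S_{n+1}$ holds only for $n$ large --- your ``for all large $n$'' already accommodates this, but the membership claim should be stated with that restriction; second, a cube $Q_i\subset S_{n+1}$ has side at most $2e^{n+1}$ rather than $e^{n+1}$, which changes your constant $U$ by a factor of $2$ and nothing else.
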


\section{Multi-fractality of Gaussain random fields}\label{sec3}
In this section, we prove Theorem \ref{thmGaussian}. We separate the proof into two parts: The upper bound and the lower bound of \eqref{setGaussian}.
\subsection{Proof of the upper bound in Theorem \ref{thmGaussian}}\label{subsec3.1}
For the upper bound in Theorem \ref{thmGaussian}, we use the independently discovered result of Borell (\cite{Borell}) and Tsirelson, Ibragimov, and Sudakov (TIS) (\cite{TIS}), which gives the tail probability for the supremum of Gaussian random fields. 

\begin{proposition}[Borell-TIS inequality, Theorem 2.1.3, \cite{AT}]\label{Borellineq}  Let $\{Z(t) \}_{t\in \R^d}$ be a continuous stationary Gaussian random field with mean zero and variance one. For all $x \geq \mu :=\E \left[\sup_{s\in Q(t,1)} Z(s)\right] $
$$
\P \left\{\sup_{s\in Q(t,1) }Z(s) \geq x \right\}\leq 2 e ^{-\frac{x^2}{2}+\mu x },
$$uniformly for all $t \in \R^d$.
\end{proposition}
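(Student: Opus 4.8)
The plan is to recognize the stated inequality as a (mildly weakened) form of the classical Borell--TIS inequality, whose real engine is the concentration of a Lipschitz functional of a Gaussian vector about its mean. Throughout write $M := \sup_{s \in Q(t,1)} Z(s)$ and $\mu := \E M$, which is finite by hypothesis. The strategy has three moving parts: a reduction to a finite maximum, the identification of that maximum as a $1$-Lipschitz function of a standard Gaussian vector, and an application of Gaussian concentration followed by a limiting argument.

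First I would pass from the supremum over the cube to a finite maximum. Since $\{Z(s)\}$ has continuous sample paths and $Q(t,1)$ is separable, I fix a countable dense set $\{s_1,s_2,\dots\}\subset Q(t,1)$ and set $M_n := \max_{1\le i\le n} Z(s_i)$. Continuity forces $M_n \uparrow M$ almost surely, and monotone convergence (comparing against the integrable $Z(s_1)$) gives $\mu_n := \E M_n \uparrow \mu$. Hence it is enough to prove a tail bound for $M_n$ with a constant independent of $n$ and then let $n\to\infty$. Next I would exhibit $M_n$ as a $1$-Lipschitz function of a standard Gaussian vector: writing $\Sigma$ for the covariance matrix of $(Z(s_1),\dots,Z(s_n))$ and factoring $\Sigma=AA^{\mathsf T}$, one has $(Z(s_1),\dots,Z(s_n))\stackrel{d}{=}Ag$ with $g\sim N(0,I_n)$, so that $M_n=\max_i\langle a_i,g\rangle$ where $a_i$ is the $i$th row of $A$ and $\|a_i\|^2=\Sigma_{ii}=\Var Z(s_i)=1$. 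A one-line estimate evaluating at the maximizing index then yields $|M_n(g)-M_n(g')|\le\|g-g'\|$; it is exactly the normalization $\Var Z \equiv 1$ that pins the Lipschitz constant to the maximal standard deviation, namely $1$.

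With the Lipschitz property in hand I would invoke the Gaussian concentration inequality for Lipschitz functions: if $F:\R^n\to\R$ is $L$-Lipschitz and $g\sim N(0,I_n)$, then $\P(F(g)\ge \E F+u)\le e^{-u^2/(2L^2)}$ for $u\ge 0$. Taking $L=1$ gives $\P(M_n\ge \mu_n+u)\le e^{-u^2/2}$, i.e. $\P(M_n\ge x)\le e^{-(x-\mu_n)^2/2}$ for $x\ge\mu_n$. Sending $n\to\infty$ along $M_n\uparrow M$ and $\mu_n\uparrow\mu$ (first for $\{M>x\}$, then taking $x-\delta\uparrow x$) produces $\P(M\ge x)\le e^{-(x-\mu)^2/2}$ for all $x\ge\mu$. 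Expanding the square, $e^{-(x-\mu)^2/2}=e^{-x^2/2+\mu x-\mu^2/2}\le 2\,e^{-x^2/2+\mu x}$, which is the asserted bound; stationarity makes $\mu$ and the variance identical for every cube $Q(t,1)$, giving uniformity in $t$ for free.

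The substantive obstacle is the Gaussian concentration inequality invoked in the last step — everything else is a routine reduction. One may either cite it, or derive it from the Gaussian logarithmic Sobolev inequality via the Herbst argument: apply the log-Sobolev inequality to $e^{\lambda F/2}$ to obtain a differential inequality for $\psi(\lambda):=\log\E\,e^{\lambda(F-\E F)}$, integrate it to the bound $\psi(\lambda)\le \lambda^2 L^2/2$, and optimize the resulting Chernoff bound over $\lambda\ge 0$; the Gaussian isoperimetric inequality gives an alternative route. The only technical point there is that the maximum is not smooth, which is handled by a standard mollification before differentiating, after which the mollification parameter is sent to zero.
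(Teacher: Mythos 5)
Your proof is correct, but note that the paper does not actually prove this proposition: it is quoted (in slightly weakened form) from Theorem~2.1.3 of Adler and Taylor \cite{AT}, with the finiteness of $\mu$ addressed only by a pointer to Theorem~6.2.1 of Marcus and Rosen \cite{MR}; so the comparison is with the textbook argument rather than with anything in the paper itself. Your route --- reduce to $M_n := \max_{1\le i\le n} Z(s_i)$ over a countable dense set via sample-path continuity, realize $M_n$ as a $1$-Lipschitz function of a standard Gaussian vector through the factorization $\Sigma = AA^{\mathsf T}$ with $\lVert a_i\rVert^2 = \Var Z(s_i) = 1$, apply Gaussian concentration about the mean, and pass to the limit using $\{M > x\} = \bigcup_n \{M_n > x\}$ together with $\mu_n \uparrow \mu$ --- is the standard modern derivation and is sound at every step; the Herbst/log-Sobolev argument you sketch for the concentration step, with mollification of the maximum before differentiating, is complete in outline. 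It in fact yields the sharper bound $\P\{M \ge x\} \le e^{-(x-\mu)^2/2}$, from which the displayed inequality follows by discarding $e^{-\mu^2/2} \le 1$, so the factor $2$ in the statement is not even needed. By contrast, the proof in \cite{AT} runs the finite-dimensional step through the Gaussian isoperimetric inequality, which concentrates about the \emph{median} and needs an extra argument to replace the median by the mean; your log-Sobolev route reaches the mean directly, at the cost of invoking (or proving via Herbst) the logarithmic Sobolev inequality. One cosmetic caveat: finiteness of $\mu$ is not literally a hypothesis of the statement --- it follows from almost sure continuity (this is the content of the Marcus--Rosen citation, or of the Landau--Shepp/Fernique theorem) --- though if $\mu = \infty$ the claim is vacuous in any case, so nothing is lost.
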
 
Note that the continuity of Gaussian random field implies that $\E \left[\sup_{s\in Q(t,1)} Z(s)\right]$ is finite (see Theorem 6.2.1, \cite{MR}). Now we are ready to present the proof of Theorem \ref{thmGaussian}. 
\begin{proof}[Proof of the upper bound in Theorem \ref{thmGaussian}] Choose and fix any $\gamma \in (0,d)$.
Recall the random set 
\begin{equation}
     \mathcal{Z}(\gamma) := \left\{ t\in \R^d  : \lVert t \rVert \geq e , Z(t) \geq  \sqrt{2\gamma \log \lVert t \rVert  }\right\}.
\end{equation}
Set, for all integers $n\geq 0 $,
\begin{equation}
     \mathcal{Z}_n(\gamma) := \left\{ t\in S_n : \lVert t \rVert \geq e , Z(t) \geq  \sqrt{2\gamma \log \lVert t\rVert }\right\},
\end{equation}
and 
\begin{equation}
    \Tilde{\mathcal{Z}}(\gamma) : = \bigcup_{n=0}^\infty \ \mathcal{Z}_n(\gamma). 
\end{equation}
Since $\mathcal{Z}(\gamma) \subset \Tilde{\mathcal{Z}}(\gamma)$, it suffices to show that $\Dimh   (\Tilde{\mathcal{Z}}(\gamma)) \leq d-\gamma$ a.s. We cover each $\mathcal{Z}_n(\gamma)$ with $Q(t,1)$ for $t\in S_n \cap \Z^d$. Then we obtain 
\begin{align*}
    \E\left[\nu^n_\rho \left( \Tilde{\mathcal{Z}}(\gamma)\right)\right] &\leq \E\left[ \sum_{t\in S_n \cap \Z^d} \left(\frac{1}{e^n}\right)^\rho \mathds{1}_{ \Tilde{\mathcal{Z}}(\gamma)}(t)\right]\\
    &\leq C e^{n(d-\rho)} \sup_{t\in S_n} \P \left\{\sup_{s\in Q(t,1)} Z(s) \geq \sqrt{2\gamma n} \right\}\\
    &\leq C e^{n(d-\gamma-\rho+o(1)) },
\end{align*}
as $n\rightarrow \infty$, where $C$ is a positive real constant which is independent of $n$. The last inequality is justified by Proposition \ref{Borellineq}. The above display shows that $ \E\left[\nu^n_\rho ( \Tilde{\mathcal{Z}}(\gamma))\right]$ is summable over all integers $n\geq 0$, thus $\sum_{n=0}^\infty\nu^n_\rho ( \Tilde{\mathcal{Z}}(\gamma))< \infty$, a.s. whenever $\rho >d-\gamma$. This implies 
$$
\Dimh (\Tilde{\mathcal{Z}}(\gamma)) \leq \rho, \quad\text{a.s.}
$$
By taking $\rho \searrow d-\gamma$, we can conclude that 
$$
\Dimh ( \Tilde{\mathcal{Z}}(\gamma)) \leq d-\gamma, \quad\text{a.s.}
$$
\end{proof}

\subsection{Proof of the lower bound in Theorem \ref{thmGaussian}}\label{subsec3.2}

For the proof of the lower bound in Theorem \ref{thmGaussian}, we use the recent result of Lopes \cite{Lopes} about the maximum of dependent Gaussian random variables. Let $X=(X_1, ..., X_n)$ be a Gaussian vector with mean vector $0$ and a covariance matrix $R$. We denote by $R_{i,j}$ an entry of $R$ for $1\leq i,j\leq n$.

\begin{proposition}[Theorem 2.2,\cite{Lopes}]\label{propgaussian} Let $X$ be a Gaussian random vector as above. Suppose that $\max_{i\neq j} R_{i,j} \leq \rho_0$ for some fixed constant $\rho\in (0,1)$. Fix another constant $\gamma_0 \in (0,1)$ and define the constants
$$
\alpha_0 = \frac{(1-\rho_0)(1-\sqrt{\gamma_0})^2}{\rho_0}, \quad \text{and} \quad \beta_0 = \frac{(1-\rho_0)(1-\sqrt{\gamma_0})}{\rho_0}.
$$
Then there exists a constant $C$ only depending on $(\gamma_0, \rho_0)$ such that 
\begin{equation}
    \P \left \{ \max_{1\leq i \leq n}X_i \leq \sqrt{2\gamma_0 (1-\rho_0)\log n }\right\}\leq C n^{-\alpha_0} \left( \log n \right)^\frac{\beta_0-1}{2}.
\end{equation}
\end{proposition}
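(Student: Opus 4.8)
The plan is to reduce the general bounded-correlation case to the equicorrelated case by a normal comparison inequality, and then to analyze the equicorrelated maximum explicitly through its common-factor decomposition. Throughout I assume, as in Lopes's standardized setting and as required for the application to Theorem \ref{thmGaussian}, that $R_{i,i}=1$, so that each $X_i$ is standard normal; write $u=u_n:=\sqrt{2\gamma_0(1-\rho_0)\log n}$. Let $R_0:=(1-\rho_0)I+\rho_0 J$, where $J$ is the all-ones matrix; since $\rho_0\in(0,1)$ this is positive definite (eigenvalues $1-\rho_0$ and $1+(n-1)\rho_0$) with unit diagonal, hence a bona fide covariance. Because $\max_{i\ne j}R_{i,j}\le\rho_0$ and the diagonals of $R$ and $R_0$ agree, Slepian's comparison lemma gives, for the equicorrelated vector $Y\sim \cN(0,R_0)$,
\[
\P\Big\{\max_{1\le i\le n}X_i\le u\Big\}\ \le\ \P\Big\{\max_{1\le i\le n}Y_i\le u\Big\};
\]
intuitively, larger correlations make it likelier that all coordinates sit below a common level. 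It therefore suffices to bound the equicorrelated probability.

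Next I would exploit the exact representation $Y_i=\sqrt{\rho_0}\,W+\sqrt{1-\rho_0}\,Z_i$, with $W,Z_1,\dots,Z_n$ i.i.d.\ standard normal, which reproduces $\E[Y_i^2]=1$ and $\E[Y_iY_j]=\rho_0$. Setting $M_n:=\max_i Z_i$ and conditioning on the common factor $W$,
\[
\P\Big\{\max_i Y_i\le u\Big\}=\int_{\R}\phi(w)\,\Phi\big(t(w)\big)^n\,\d w,\qquad t(w):=\frac{u-\sqrt{\rho_0}\,w}{\sqrt{1-\rho_0}},
\]
where $\Phi,\phi$ denote the standard normal CDF and density. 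Since $u/\sqrt{1-\rho_0}=\sqrt{2\gamma_0\log n}$ lies strictly below the typical value $\sqrt{2\log n}$ of $M_n$ (because $\gamma_0<1$), the factor $\Phi(t(w))^n$ is negligible unless $w$ is negative enough to lift $t(w)$ back up to the level $\sqrt{2\log n}$.

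Solving $t(w)=\sqrt{2\log n}$ locates the critical shift
\[
w^\ast=-\frac{\sqrt{1-\rho_0}}{\sqrt{\rho_0}}\,(1-\sqrt{\gamma_0})\,\sqrt{2\log n},\qquad \text{with}\quad \tfrac12(w^\ast)^2=\alpha_0\log n,
\]
so that the common-factor density at the critical point is $\phi(w^\ast)\asymp n^{-\alpha_0}$; this is precisely the source of the polynomial rate. The polylogarithmic factor then comes from a Laplace-type estimate of the integral near $w^\ast$. Using $\Phi(v)^n\le\exp(-n\overline{\Phi}(v))$ together with the Mills ratio $\overline{\Phi}(v)\sim\phi(v)/v$, and the extreme-value fluctuation scale $1/\sqrt{2\log n}$ of $M_n$ about $\sqrt{2\log n}$, one checks that the relevant sensitivity exponent $|w^\ast|\big/\big(\kappa\sqrt{2\log n}\big)$, with $\kappa:=\sqrt{\rho_0/(1-\rho_0)}$, equals exactly $\beta_0=\tfrac{(1-\rho_0)(1-\sqrt{\gamma_0})}{\rho_0}$; assembling the pieces yields $C\,n^{-\alpha_0}(\log n)^{(\beta_0-1)/2}$ with $C=C(\gamma_0,\rho_0)$ independent of $n$.

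I expect the main obstacle to be this last step: one must control the lower deviations of the i.i.d.\ Gaussian maximum $M_n$ below $\sqrt{2\log n}$ — not merely its Gumbel limit but its genuine lower tail — uniformly in $n$, in order to pin down the exact exponent $(\beta_0-1)/2$ on $\log n$ and to keep the constant $C$ free of $n$. The comparison and decomposition steps are routine; the delicate saddle-point accounting of the Mills-ratio corrections in the final integral is where all the difficulty lies.
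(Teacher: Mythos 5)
Your proposal is correct and follows exactly the route the paper itself indicates for Proposition \ref{propgaussian}: the paper does not reprove the result but defers to Lopes \cite{Lopes}, describing the argument as a comparison with equally correlated Gaussian variables via Slepian's inequality followed by a direct calculation, which is precisely your reduction to $Y_i=\sqrt{\rho_0}\,W+\sqrt{1-\rho_0}\,Z_i$ and the ensuing Laplace analysis of $\int\phi(w)\Phi(t(w))^n\,\d w$. Your identification of the critical shift $w^\ast$ with $\tfrac12(w^\ast)^2=\alpha_0\log n$ and of the sensitivity exponent $\beta_0$ matches the constants in the statement, and you correctly flag the uniform lower-tail control of the i.i.d.\ maximum (the Mills-ratio saddle-point accounting) as the only genuinely delicate step, which is exactly what Lopes's direct calculation supplies.
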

The proof of Proposition \ref{propgaussian} is a direct calculation by constructing equally correlated Gaussian random variables and the Slepian's inequality, hence we just refer to \cite{Lopes} and begin the proof of the upper bound in Theorem \ref{thmGaussian}.
\begin{proof}[Proof of the lower bound in Theorem \ref{thmGaussian}]
Choose and fix and arbitrary $\gamma \in (0,d)$. For a technical reason, we choose another $\gamma_1\in (\gamma,d)$. We take $\theta \in (\gamma_1/d,1)$ and $\delta \in ( 0,\theta - \gamma_1/d)$ in turn. We recall the notations in Section \ref{subsec2.1}: We set, for integers $n\geq 0 $, 
$$
a_{j,n}(\theta) := e^n + je^{n\theta}, \qquad 0\leq j < e^{n(1-\theta)},
$$
$$
I_{n}(\theta) : = \bigcup_{\substack{0\leq j \leq e^{n(1-\theta)}:\\ j\in \Z}} \{ a_{j,n}(\theta)\},
$$
and
$$
\mathcal{I}_{n}(\theta) : = \prod_{i=1}^{d} I^i_{n}(\theta)
$$
where $I^i_n(\theta)$ is a copy of $I_n(\theta)$ for each $i$.  Now  choose and fix $t \in \mathcal{I}_n(\theta)$. We can find $\{ t_i\}_{i=1}^{m(n)}$ such that $t_i \in \mathcal{I}_n(\delta) \cap Q(t, e^{n\theta})$ for all $i=1,2,...,m(n)$, and  
\begin{enumerate}
    \item[(A.1)] $\lVert t_k - t_l \rVert \geq e^{n\delta}$ whenever $1\leq k<l \leq m(n)$.
    \item[(A.2)] $c^{-1}e^{nd(\theta-\delta)} \leq m(n) \leq c e^{nd(\theta-\delta)}$ for some constant $c>1$ only depending on $d$. 
\end{enumerate} 
Let us choose an arbitrary $\rho_0 \in (0, 1- \frac{\gamma_1}{d(\theta-\delta)})$. If we define 
$$
\gamma_0 : = \frac{\gamma_1}{d(\theta-\delta)(1-\rho_0)},
$$
then $0<\gamma_0 <1$ by the choice of $\rho_0$. Then by Proposition \ref{propgaussian}, we have 
\begin{align}\label{eq_propbound1}
    \P \left\{ \max_{1\leq i \leq m(n)} Z(t_i) \leq \sqrt {2\gamma_0 (1-\rho_0)\log m(n)}\right\}
    &\leq C \exp\{-\alpha_0 nd(\theta-\delta)\} \left(nd(\theta-\delta)\right)^{\frac{\beta_0-1}{2}},
\end{align}
where $C=C(d)$ is independent of $n$.
Since $m(n)$ is not equal to $e^{nd(\theta-\delta)}$, we take $\gamma_1> \gamma$ so that 
\begin{equation}
    \gamma_1> \frac{\gamma_1 \log c}{nd(\theta-\delta)} + \gamma,
\end{equation}
for all sufficiently large $n\geq 1$, where $c$ is the constant in (A.2). Clearly this is possible for an arbitrary $\gamma_1>\gamma$. 
Then we have 
\begin{align}\label{eq_propbound2}
    \P \left \{\max_{1\leq i\leq  m(n)} Z(t_i) \leq \sqrt{2\gamma n } \right\} &\leq  \P \left\{\max_{1\leq i \leq m(n)} Z(t_i) \leq \sqrt {2\gamma_1 n -\frac{ 2\gamma_1 \log c}{d(\theta- \delta)}}\right\}\nonumber \\
    &\leq \P \left\{\max_{1\leq i \leq m(n)} Z(t_i) \leq \sqrt {2\gamma_0 (1-\rho_0)\log m(n)}\right\}, 
\end{align}
for all large integers $n $.
Thus we can proceed as follows: Combining \eqref{eq_propbound1} and \eqref{eq_propbound2},
\begin{align*}
    \P\left\{ \max_{\{t_i\}_{i=1}^{m(n)} \in \mathcal{I}_n(\delta) \cap Q(t, e^{n\theta})} \frac{Z(t_i)}{\sqrt{\log \lVert t_i\rVert }} \leq \sqrt{2\gamma }\right\} &\leq \P \left\{ \max_{1\leq i \leq m(n)} \frac{Z(t_i)}{\sqrt{\log \lVert t_i\rVert }} \leq \sqrt{2\gamma}  \right\}\\
    &\leq \P \left\{ \max_{1\leq i \leq m(n)} Z(t_i) \leq \sqrt {2\gamma n }\right\}\\
    &\leq C \exp\{-\alpha_0 nd(\theta-\delta)\} \left( nd(\theta-\delta)\right)^{\frac{\beta_0-1}{2}}.
\end{align*}
Therefore, we can verify that there are many tall peaks in a macroscopic view. More precisely, we can estimate the following probability: 
\begin{align}\label{BorelGaussian}
    &\sum_{n=0}^\infty \P \left \{ \text{There exists } t\in \mathcal{I}_n(\theta) \text{ such that }  \max_{s \in \mathcal{I}_n(\delta) \cap Q(t, e^{n\theta})}\frac{Z(s)}{\sqrt{\log s}}  \leq \sqrt{2\gamma } \right\}\nonumber\\
    &\leq \sum_{n=1}^\infty \sum_{t\in \mathcal{I}_n(\theta)} \P \left\{ \max_{1\leq i \leq m(n)} Z(t_i) \leq \sqrt {2\gamma n }\right\}\nonumber\\
    &\leq \sum_{n=1}^{\infty}  C \exp\{nd \left( (1-\theta) - \alpha_0(\theta-\delta)\right) \} \left( nd(\theta-\delta)\right)^{\frac{\beta_0-1}{2}} < \infty,
\end{align}
whenever $\alpha_0 > (1-\theta)/(\theta-\delta)$. This is possible when $\rho_0$ is sufficiently small. Indeed, 
$$
\alpha_0 = \frac{1-\rho_0}{\rho_0 } \left(1- \sqrt{\frac{ \gamma_1 }{d(\theta-\delta)(1-\rho_0)}} \right)^2 
$$
goes to $\infty$ as $\rho_0$ goes to $0$ since $\frac{\gamma_1}{d(\theta- \delta)}<1$. For fixed $\theta$ and $\delta$, by the assumption on the spatial correlation, we can take any small $\rho_0$ since $\rho(\lVert t_i-t_j \rVert) \leq \rho \left(e^{n\delta}\right) \rightarrow 0 $ as $n\rightarrow \infty$. By the Borel-Cantelli lemma, \eqref{BorelGaussian} implies that the random set 
$$
 \mathcal{Z}(\gamma)= \left\{ t\in \R^d  : \lVert t \rVert \geq e , Z(t) \geq  \sqrt{2\gamma \log \lVert t\rVert }\right\}
$$
contains a $\theta$-thick set almost surely (see the proof of Theorem 4.7 in \cite{KKX} for more details). Thus $\Dimh \mathcal{Z}(\gamma)\geq d(1-\theta)$. Taking $\delta \searrow 0$ and $\theta \searrow \gamma_1 /d$ with satisfying $\theta - \delta > \gamma_1/ d $, we get the desired lower bound as we take $\gamma_1 \searrow \gamma$. 
\end{proof}

\section{Applications: Linear stochastic partial differential equations}\label{sec4}
In this section, we present some applications of Theorem \ref{thmGaussian}. First, we consider the spatial peaks of solution to \eqref{SHE} and \eqref{SWE} at each fixed time $t>0$. After that, we also give the \textit{spatio-temporal multi-fractality} of linear stochastic heat and wave equations.

\subsection{Spatial multi-fractality of linear stochastic heat and equations}\label{subsec4.1}
In this subsection, we present the proof of Theorem \ref{thmheat} and Theorem \ref{thmwave}.

\begin{proof}[Proof of Theorem \ref{thmheat}.]
By the theory of Walsh \cite{Walsh} and Dalang \cite{Dalang}, with the conditions on $f$ in Theorem \ref{thmheat}, we have the \textit{random field} solution to \eqref{SHE} 
\begin{equation}
    Z^H(t,x) = \int_0^t\int_{\R^d} p^H_{t-s}(x-y) \, F(dsdy), \quad t>0,x\in \R^d, 
\end{equation}
where $p^H(x)$ is the transition density of the isotropic $\alpha$-stable process. $\{Z^H(t,x)\}_{t\geq 0 , x\in \R^d}$ is a centered stationary Gaussian random field with finite variance (see \cite{FK}). Stationarity implies that the variance $v^H(t)$ is independent of $x$.  Moreover, the condition \eqref{contiH} guarantees the almost sure H\"older continuity of $\{Z^H(t,x)\}_{t\geq 0 , x\in \R^d}$ in both time and space variables (see \cite{BEM}). Now we claim that 
\begin{equation}\label{dimh}
    \Dimh \left(\mathcal{Z}^H_t(\gamma)\right) = (d- \gamma) \vee  0, \quad \text{a.s.}
\end{equation}
If we show that the correlation 
\begin{equation} \label{corrH}
    \Corr \left(Z^H(t,x), Z^H(t,y) \right) \rightarrow 0 \quad \text{as } \lVert x-y \rVert \rightarrow \infty \text{ for each }t>0,
\end{equation}
then the proof of the lower bound of \eqref{dimh} is a direct consequence of Theorem \ref{thmGaussian}. To this end, using the change of variables, we have 
\begin{align}\label{eq_cov}
     \E\left[Z^H(t,x)Z^H(t,y) \right] &= \int_0^t \int_{\R^d}\int_{\R^d} p^H_{t-s}(x-z)p^H_{t-s}(y-z')f(z-z')\,dzdz'ds\nonumber\\
     &=  \int_0^t \int_{\R^d}\int_{\R^d} p^H_{s}(z)p^H_{s}(z')f(z-z'+(x-y))\,dzdz'ds \nonumber\\
     &= \int_0^t \int_{\R^d} p_s^H(z)\left(p_s^H * f \right)(z+(x-y)) dz ds\nonumber\\
     &= \int_0^t \int_{\R^d} e^{-2s \lVert \xi \rVert ^\alpha+ i(x-y)\cdot \xi} \hat{f}(d\xi)\nonumber\\
     &\leq C(t) \int_{\R^d} \frac{e^{i(x-y)\cdot \xi}\hat{f}(d\xi)}{1+ \lVert \xi \rVert^\alpha}.
\end{align}
We have used the Plancherel identity and Lemma 4.1 in \cite{FK} in the last inequality. Moreover, the condition \eqref{nondegeneracy} ensures that the variance $v(t)= \E\left[Z^H(t,x)^2\right]>0$ and is actually an increasing function of $t$ (see Proposition 5.3 in \cite{CKNP2}). Hence we conclude that \eqref{corrH} holds if we apply the dominated convergence theorem to the second line with the condition (i). With the condition (iii) (or (ii) which implies (iii)) in the last line, we can obtain \eqref{corrH}. Therefore we can 
apply Theorem \ref{thmGaussian} and easily deduce \eqref{dimh}.

  For the upper bound, this is also an easy corollary of Theorem \ref{thmGaussian}, since $\{  \frac{Z^H(t,x)}{\sqrt{v^H(t)}}\}_{x\in \R^d}$ is a continuous, stationary Gaussian random field with mean zero and variance one for each $t>0$ (See section \ref{subsec3.1}). 
  \end{proof}
  

Now we prove Theorem \ref{thmwave}. We omit the details since the proof is very similar to the proof of Theorem \ref{thmheat}.
\begin{proof} [Proof of Theorem \ref{thmwave}]
Dalang \cite{Dalang} showed that, under the condition \eqref{dalang}, \eqref{SWE} has the random field solution 
 \begin{equation}
     Z^W(t,x) = \int_0^t\int_{\R^d} p^W_{t-s}(x-y) \, F(dsdy), \quad t>0, x\in \R^d, 
\end{equation}
where $p^W_t(x)$ is Green's function of the wave operator $\frac{\partial^2}{\partial t^2 }-\Delta$ in dimension $d\in \{1,2,3\}$ (see \cite{minicourse}). Under the assumptions in Theorem \ref{thmwave}, the solution $\{Z^W(t,x)\}_{t\geq 0, x \in \R^d}$ is a centered stationary continuous Gaussian random field with finite variance (for the H\"older continuity, see \cite{DS}). By the same reasons as in the derivation of \eqref{eq_cov} (also see \cite[Proposition 3.8]{FKN}), we have 
\begin{align}
 \E\left[Z^W(t,x)Z^W(t,y) \right] &= \int_0^t \int_{\R^d}\int_{\R^d} p^W_{t-s}(x-z)p^W_{t-s}(y-z')f(z-z')\,dzdz'ds\nonumber\\
     &=  \int_0^t \int_{\R^d}\int_{\R^d} p^W_{s}(z)p^W_{s}(z')f(z-z'+(x-y))\,dzdz'ds\nonumber\\
   &\leq C(t) \int_{\R^d} \frac{e^{i(x-y)\cdot \xi}\hat{f}(d\xi)}{1+ \lVert \xi \rVert^2}.
\end{align} Therefore, with a minor modification, following the similar argument as in the proof of Theorem \ref{thmheat} completes the proof.
\end{proof}

\subsection{Spatio-temporal multi-fractality of linear stochastic heat and wave equations}\label{subsec4.2}

To prove Theorem \ref{thmsptime}, we cannot use Theorem \ref{thmheat} directly, since the temporal correlation gets larger as time goes to infinity (see the paragraph below Theorem \ref{thmGaussian}). The key is to use a stretch factor $g$ in Theorem \ref{thmsptime}. This makes the temporal elapse of the solution smaller, hence the correlation on each exponential shell $S_n$ is dominated by the spatial correlation. We only prove the spatio-temporal multi-fractality for \eqref{SHE} (i.e., \eqref{eq_dimspheat}) since the proof for \eqref{SWE} is very similar.

The proof of the upper bound in Theorem \ref{thmGaussian} follows from the proper upper bound for the tail probability of Gaussian random fields.

\begin{proposition}[Proposition 4.7,\cite{KKX2}]\label{propsptime}
     
For a function $g:(1,\infty) \rightarrow (1,\infty)$ that strictly increases to infinity, for all reals $l\geq 1 $, there exists a finite positive constant $C:= C(l)>1$ such that 
\begin{align*}
    \P&\left\{ \exists t \in (b,b+l] : \sup_ {x\in Q(a,1)} \frac{Z^H(t,x)}{\sqrt{v^H(t)}} \geq \sqrt{2\gamma g(t)}\right\}\\
    &\leq 2 \exp \left\{-\gamma g(b)+ C \sqrt{2\gamma g(b)}\left( \frac{l}{\sqrt{v^H(b)}}+1\right)\right\},
\end{align*} uniformly for all $a\in \R^d$ and all sufficiently large $b\geq 1$.
\end{proposition}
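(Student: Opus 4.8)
The plan is to dominate the spatio-temporal tail event by the tail of a supremum over a \emph{fixed} compact box, and then apply the Borell--TIS inequality to a unit-variance Gaussian field; the only substantive work left is then an estimate of the expected supremum of that field.

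Set $K := [b,b+l]\times Q(a,1)$ and let $W(t,x) := Z^H(t,x)/\sqrt{v^H(t)}$. Since $v^H(t)$ does not depend on $x$, $W$ is a continuous (by \eqref{contiH}) centered Gaussian field with $\Var W(t,x)=1$ for every $(t,x)$. Because $g$ increases monotonically, $\sqrt{2\gamma g(t)}\geq \sqrt{2\gamma g(b)}$ for all $t\in(b,b+l]$, so
\begin{align*}
\left\{\exists\, t\in(b,b+l]:\sup_{x\in Q(a,1)}W(t,x)\geq\sqrt{2\gamma g(t)}\right\}\subseteq\left\{\sup_{(t,x)\in K}W(t,x)\geq\sqrt{2\gamma g(b)}\right\}.
\end{align*}
It therefore suffices to bound the probability of the right-hand event. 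I would then invoke the Borell--TIS inequality in the form valid for any continuous centered unit-variance Gaussian field on a compact index set (Proposition \ref{Borellineq} is the purely spatial special case; the identical statement holds verbatim for $W$ on $K$). With $u:=\sqrt{2\gamma g(b)}$ and $\mu:=\E[\sup_{(t,x)\in K}W(t,x)]$, and noting $u\to\infty$ as $b\to\infty$ so that $u\geq\mu$ for all large $b$, this gives
\begin{align*}
\P\left\{\sup_{(t,x)\in K}W(t,x)\geq u\right\}\leq 2\exp\left(-\tfrac{u^2}{2}+\mu u\right)=2\exp\left(-\gamma g(b)+\mu\sqrt{2\gamma g(b)}\right).
\end{align*}
Comparing with the claimed bound, the proof reduces to showing $\mu\leq C(l)\bigl(l/\sqrt{v^H(b)}+1\bigr)$ uniformly in $a$ and for all large $b$.

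To estimate $\mu$ I would split it along the time slice $t=b$,
\begin{align*}
\mu\leq\E\Bigl[\sup_{x\in Q(a,1)}W(b,x)\Bigr]+\E\Bigl[\sup_{(t,x)\in K}\bigl(W(t,x)-W(b,x)\bigr)\Bigr].
\end{align*}
The first term is the expected supremum of a unit-variance stationary Gaussian field over a unit cube; by spatial stationarity it is independent of $a$, and the uniform local (Hölder) modulus coming from \eqref{contiH} together with Dudley's entropy bound makes it bounded by an absolute constant, producing the ``$+1$''. For the second term I would estimate the canonical metric $\E[(W(t,x)-W(s,y))^2]$ from the covariance formula \eqref{eq_cov}: normalizing $Z^H$ by $\sqrt{v^H(t)}\geq\sqrt{v^H(b)}$ (using that $v^H$ is increasing, hence also bounded below away from $0$ via \eqref{nondegeneracy}) produces the factor $1/\sqrt{v^H(b)}$, while the temporal increment of $Z^H$ over a window of length at most $l$ contributes the factor $l$; Dudley's bound over $K$ then yields the second term $\leq C(l)\,l/\sqrt{v^H(b)}$. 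Combining the two estimates gives the required bound on $\mu$, and plugging it into the displayed Borell--TIS estimate finishes the proof.

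The main obstacle is the increment estimate in the last step. Controlling $\E[(W(t,x)-W(b,x))^2]$ uniformly for $t\in[b,b+l]$ requires decomposing the temporal increment $Z^H(t,x)-Z^H(b,x)$ into the fresh-noise contribution on $[b,t]$, whose variance equals $v^H(t-b)$, and the kernel-difference contribution on $[0,b]$, and then bounding both against $l$ and the growth rate of $v^H$, so that after division by $\sqrt{v^H(t)}$ the clean linear-in-$l$, $1/\sqrt{v^H(b)}$ scaling survives uniformly in the base point $a$ and for all large $b$. This is precisely the delicate quantitative point handled in \cite{KKX2}.
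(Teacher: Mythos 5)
Your proposal is correct and follows essentially the same route as the paper: the paper's own proof likewise combines the increment moment bound coming from \eqref{contiH} and \cite{BEM} with a chaining argument and the Borell--TIS inequality (Proposition \ref{Borellineq}), deferring the quantitative details to the proof of \cite[Proposition 4.7]{KKX2}. Your reduction to the box $(b,b+l]\times Q(a,1)$ and the splitting of the expected supremum into a spatial term and a temporal-increment term is exactly the structure of that argument, including the same deferred step (the uniform-in-$b$ increment estimate that produces the factor $l/\sqrt{v^H(b)}$).
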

\textit{Proof of Proposition \ref{propsptime}} 
The proof is very similar to the proof of \cite[Proposition 4.7]{KKX2}. The proof follows from that the condition \eqref{contiH} and \cite{BEM} implies that for all $p\geq2$ and $T>0$, there exists a positive constant $C=C(p,T,\eta)$ such that for all $0<t,s<T$ and $x,y\in\R^d$,
\begin{equation}
    \E \left[ | Z^H(t,x) - Z^H(s,y) |^p    \right] \leq C\left(|t-s|^{p(1-\eta) /2} + \lVert x-y \rVert^{p(1-\eta)/2 \wedge 1/2}  \right).
\end{equation}
The previous display, the chaining argument and Borell-TIS inequality (Proposition \ref{Borellineq}) complete the proof (see the proof of \cite[Proposition 4.7]{KKX2} for details). \qed

\begin{proof}[Proof of the upper bound in Theorem \ref{thmsptime}]

Recall the random set \eqref{set_heatsp}
\begin{align*}
    \H(\gamma) & = \left\{(e^{g(t)},x)\in (e,\infty) \times \R^d :Z^H(t,x) > \sqrt {2\gamma  v^H(t) g(t)  }  \right\}\\
    &= \left\{ (t,x) \in (1,\infty) \times \R^d : Z^H\left( g^{-1}(\log t ) , x \right)> \sqrt{2\gamma v^H( g^{-1}(\log t )) \log t } \right\}.
\end{align*}
By Proposition \ref{propsptime}, for all $a\in \R^d$ and all sufficiently large $b\geq1$ we have 
\begin{align*} 
    \P& \left\{ \exists t \in \left(g^{-1}(\log b),g^{-1}(\log (b+1) ) \right]  : \sup _{x\in Q(a,1)} Z^H(t,x) > \sqrt {2\gamma  v^H(t) g(t)  }  \right\} \\
    &\leq 2 \exp \left( -\gamma \log b + C' \sqrt{2\gamma \log(b+1) }   \right),
\end{align*}
where $C'$ is a positive and finite constant which depends only on $C$ in Proposition \ref{propsptime}.

Now choose and fix any $q>1$. Uniformly for all $a\in \R^d$, $b\in ( e^{n/q} ,e^{n+1}]$ and sufficiently large integers $n\geq 1$, 
\begin{align}\label{estiupper}
\P&\left\{(e^{g(t)},x) \in \mathcal{Z}^H(\gamma) \text{ for some }(t,x)\in (b,b+1]\times Q(a,1) \right\}\nonumber\\
&\leq 2\exp \left(-\frac{\gamma n}{q}+C' \sqrt{2\gamma (n+2)}\right).
\end{align}

Before we proceed more, let us introduce a technical lemma for calculating the macroscopic Hausdorff dimension. 
\begin{lemma}\label{lemblock}

For fixed $q>1$ and $1\leq k \leq d$, define a set $F\subset \R^{d+1}$ as

$$
F:=  \bigcup_{n=0}^\infty F_n, 
$$
where 
$$
F_n:=(0,e^{n/q}]^{k} \times (e^{n/q}, e^{n+1}]^{d+1-k}.
$$
Then $\Dimh (F)\leq d+1-k$.

\end{lemma}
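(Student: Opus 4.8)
The plan is to bound $\Dimh(F)$ directly from its definition: I will estimate the shell masses $\nu_\rho^m(F)$ and show that $\sum_{m}\nu_\rho^m(F)<\infty$ for every $\rho>d+1-k$, so that letting $\rho\searrow d+1-k$ gives the claim. Write a point of $\R^{d+1}$ as $(u,v)$ with $u\in\R^{k}$ the \emph{thin} coordinates (those confined to $(0,e^{n/q}]^{k}$ inside $F_n$) and $v\in\R^{d+1-k}$ the \emph{fat} coordinates. The first step is bookkeeping: since every point of $F_n$ has its sup-norm realized by a fat coordinate (the fat coordinates exceed $e^{n/q}$, the thin ones do not), the condition $F_n\cap S_m\neq\emptyset$ pins $n$ to a range controlled by $m$, and on that intersection the thin coordinates still occupy only a box of side at most $e^{n/q}$, while the fat coordinates fill an annular region of scale $e^{m}$.

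Next I would cover $F_n\cap S_m$ by cubes of a single side $\ell_n:=e^{n/q}$. In each of the $k$ thin directions the total extent is at most $\ell_n$, so one cube per direction suffices; in each of the $d+1-k$ fat directions the region has scale $e^{m}$, so it takes $\asymp (e^{m}/\ell_n)^{d+1-k}$ cubes to cover. Each admissible cube $Q\subset S_m$ contributes $(s(Q)/e^{m})^{\rho}=(\ell_n/e^{m})^{\rho}$, whence
\[
\nu_\rho^m(F_n\cap S_m)\ \lesssim\ \Bigl(\tfrac{e^{m}}{e^{n/q}}\Bigr)^{d+1-k}\Bigl(\tfrac{e^{n/q}}{e^{m}}\Bigr)^{\rho}\ =\ e^{-(m-n/q)\,(\rho-(d+1-k))}.
\]
The exponent is negative exactly because $\rho>d+1-k$, so each block contributes an amount decaying geometrically in the \emph{scale gap} $m-n/q$ between the thin scale $e^{n/q}$ and the shell scale $e^{m}$. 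Along the way one checks that the chosen cubes genuinely fit inside $S_m$ (their side $\ell_n$ is well below the shell thickness $\asymp e^{m}$), so they are legitimate competitors in the infimum defining $\nu_\rho^m$.

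Summing over the finitely many blocks $F_n$ meeting $S_m$ produces a geometric series in the gap $m-n/q$, dominated by its smallest admissible term, and then summing over $m$ should give a convergent series. The hard part is precisely this last lower bound on the gap: one must verify that for \emph{every} block contributing to $S_m$ the gap $m-n/q$ is bounded below by a fixed positive multiple of $m$ (equivalently, that the thin scale $e^{n/q}$ stays a genuine power below the shell scale $e^{m}$), for otherwise the smallest-gap term is merely $O(1)$, $\nu_\rho^m(F)$ does not decay, and $\sum_m\nu_\rho^m(F)$ diverges. This is exactly where the hypothesis $q>1$ must be used: the defining separation of $F_n$ (fat coordinates bounded below by $e^{n/q}$ while the block sits near scale $e^{n}$) forces any contributing $n$ to satisfy $n/q\le (1-c)m$ for some $c=c(q)>0$, so the gap is $\gtrsim cm$ and the per-shell bound becomes $e^{-cm(\rho-(d+1-k))}$, which is summable. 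I expect this verification — isolating the regime where $e^{n/q}$ approaches $e^{m}$ and confirming, using $q>1$, that such blocks cannot actually meet $S_m$ (or contribute summably) — to be the genuine obstacle; the remaining estimates are routine cube-counting.
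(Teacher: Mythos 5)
Your covering estimates are fine, but the step you yourself flagged as ``the genuine obstacle'' is exactly where the argument fails, and it cannot be repaired: the claim that every contributing block has gap $m-n/q\ge cm$ is false, because nothing in the definition of $F_n$ anchors its points near scale $e^n$. All $d+1-k$ fat coordinates of a point of $F_n$ may sit just above $e^{n/q}$, which for $q>1$ is far below $e^n$, so $F_n$ meets $S_m$ whenever $n/q<m$ (take fat coordinates in $(e^{n/q},e^{m})$ with one of them $\ge e^{m-1}$); the contributing indices thus run all the way up to $n\approx qm$, and the gap $m-n/q$ gets as small as $1/q=O(1)$. Worse, the statement itself is false as written: for every large $m$ one can choose $n$ with $e^{n/q}\in[e^{m}/10,\,e^{m}/2]$ (the points $n/q$ are $1/q$-dense, and $e^{n+1}\ge e^{m}$ for large $m$ because $q>1$); for that $n$, the set $F_n\cap S_m$ contains a full-dimensional cube of side $e^{m}/20$ (thin coordinates in $[e^m/20,\,e^m/10]$, fat coordinates in $[3e^m/4,\,e^m)$). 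Since any admissible cover of that cube by cubes $Q_i\subset S_m$ of sides $s_i\le e^m$ satisfies $\sum_i (s_i/e^m)^{\rho}\ge\sum_i (s_i/e^m)^{d+1}\ge 20^{-(d+1)}$, we get $\nu_\rho^m(F)\ge c(d)>0$ for all large $m$ and all $\rho\le d+1$, hence $\sum_m\nu^m_\rho(F)=\infty$ and $\Dimh(F)=d+1$, not $\le d+1-k$. No bookkeeping can close your gap, because the inequality being proved is wrong.

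For what it is worth, the paper's own proof glosses over the identical point: it opens with the assertion $F_n\subset S_n$, which is false for the same reason (and also because the fat coordinates reach $e^{n+1}>e^n$). What the paper actually needs, and what its one-line cube count correctly proves, is the \emph{anchored} statement in which each block is intersected with its own shell, i.e.\ $\Dimh\bigl(\bigcup_n (F_n\cap S_n)\bigr)\le d+1-k$ --- equivalently, a bound for the union over $n$ of the sets of points of $S_n$ having at least $k$ coordinates $\le e^{n/q}$. That anchored set is also exactly what arises in the application to $\H_{\epsilon_0}\setminus\bigcup_n\mathcal{L}_n$: a point of $S_m$ avoiding every box $(e^{n/q},e^{n+1}]^{d+1}$ must (take $n=m-1$, since all its coordinates are $<e^m$) have some coordinate $\le e^{(m-1)/q}$. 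For the anchored set, every point of the $n$-th piece genuinely lives at scale $\asymp e^n$, your gap is $m-n/q=m(1-1/q)+O(1)$ exactly as you wanted, and your shell-by-shell covering collapses to the paper's count: about $e^{(n-n/q)(d+1-k)}$ cubes of side $e^{n/q}$, total cost $\asymp e^{n(1-1/q)(d+1-k-\rho)}$, summable for $\rho>d+1-k$. So the correct diagnosis is not that your verification is hard but doable; it is that the lemma must first be restated with the blocks tied to their shells, after which both your argument and the paper's go through verbatim.
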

\begin{proof}
Let $\kappa := d+1-k$.
We can cover each $F_n\subset S_n$ by cubes $Q\in \mathcal{Q}$ with $s(Q)=e^{n/q}$. Then the number of such cubes to cover $F_n$ is at most $Ce^{(n-n/q)\kappa}$ for some positive constant $C=C(d,k)$. Recalling the definitions in Section \ref{sec2}, 
$$
\nu_\rho^n(F) \leq c e^{n\left(\kappa(1-\frac{1}{q})+ \rho(\frac{1}{q}-1) \right)}=ce^{n \left( (1-\frac{1}{q})(\kappa -\rho)\right)},
$$
where $c>0$ is a fixed constant which is independent of $n$. Since $q>1$, and by the definition of the macroscopic Hausdorff dimension, $\Dimh (F) \leq \rho$ whenever $\rho> \kappa$. Thus, we complete the proof as $\rho$ decreases to $\kappa$. 
\end{proof}

Now choose and fix any $\epsilon := (\epsilon_1,...,\epsilon_d) \in \{0,1\}^d$. Let us define an orthant as 
\begin{equation*}
\mathcal{O}_\epsilon : =\left\{x= (x_1,x_2,...,x_d) \,: \, (-1)^{\epsilon_1} x_1>0, (-1)^{\epsilon_2} x_2>0,...,(-1)^{\epsilon_d} x_d>0 \right\}.
\end{equation*} We also define 
\begin{equation*}
\H_\epsilon:=\H_\epsilon(\gamma) := \H(\gamma) \cap \mathcal{O}_\epsilon.
\end{equation*} Then, by a symmetric argument, to prove 
\begin{equation}\label{dimepsilon}
\Dimh (\H_\epsilon) \leq (d+1-\gamma) \vee d, \quad \text{a.s.,}
\end{equation} for every $\epsilon \in \{0,1\}^d$, it suffices to show 
\begin{equation}
\Dimh(\H_{\epsilon_0}) \leq (d+1-\gamma) \vee d, \quad \text{a.s.,}
\end{equation} where $\epsilon_0 := (0,...,0) \in \{0,1\}^d$. Let us set 
\begin{equation*}
\mathcal{L}_n := \mathcal{L}_n(q,n,\gamma) := \H_{\epsilon_0}\cap \left( e^{n/q}, e^{n+1}\right]^{d+1},
\end{equation*} for all integers $n\geq 1$. Then one can see that by Lemma \ref{lemblock}, 
\begin{equation}
\Dimh \left( \H_{\epsilon_0} \setminus \bigcup_{n=1}^\infty \mathcal{L}_n\right) \leq d.
\end{equation} Furthermore, the property of the macroscopic Hausdorff dimension implies that 
\begin{align}\label{dimbound}
 \Dimh(\H_{\epsilon_0}) \leq \Dimh \left( \H_{\epsilon_0} \setminus \bigcup_{n=1}^\infty \mathcal{L}_n\right) \vee \Dimh \left(\bigcup_{n=1}^\infty \mathcal{L}_n\right).
\end{align} Next we claim that 
\begin{equation}\label{dimclaim}
\Dimh \left(\bigcup_{n=1}^\infty \mathcal{L}_n\right) \leq \left( d+1- \frac{\gamma}{q}\right) \vee d, \quad \text{a.s.}
\end{equation} To get the desired upper bound, we cover each $\mathcal{L}_n$ with the squares of the form $(b,b+1] \times Q(a,1)$ which satisfy 
\begin{equation}\label{covering}
    \sup_{x\in Q(a,1)} Z^H( g^{-1}(\log t), x) > \sqrt{2\gamma v^H( g^{-1}(\log t)) \log t } \quad \text{for some }t\in (b,b+1].
\end{equation} Then, equipped with the estimation \eqref{estiupper}, we have 
\begin{align*}
    \E\left[ \nu_\rho^n( \mathcal{L}_n)\right] &\leq \E\left[\sum_{\substack{(b,b+1]\times Q(a,1) \subset (e^{q/n},e^{n+1}]^{d+1}:\\\eqref{covering} \text{ holds}}} \frac{1}{e^{n\rho}}\right]\\
    &\leq C \cdot \exp \left\{-n\left(\frac{\gamma}{q}+\rho-(d+1)+ O\left(\frac{1}{\sqrt{n}} \right) \right) \right\},
\end{align*}as $n \rightarrow\infty$, where the constant $C>0$ is independent of $n$. The above display ensures that 
\begin{equation*}
\sum_{n=1}^\infty \nu_\rho^n(\mathcal{L}_n) < \infty \quad \text{a.s., for every $\rho \in \left( d+1 - \frac{\gamma}{q} ,d+1 \right]$},
\end{equation*} provided that $d+1 > \gamma/q$. Then the definition of the macroscopic Hausdorff dimension implies that \eqref{dimclaim} holds. Since $q>1$ is arbitrary (note that the definition of $\H_{\epsilon_0}$ does not depend on $q$), \eqref{dimbound} gives 
\begin{equation*}
 \Dimh(\H_{\epsilon_0}) \leq (d+1-\gamma) \vee d, \quad \text{a.s.,}
\end{equation*} which proves \eqref{dimepsilon}. Moreover, it is clear that 
\begin{equation*}
\Dimh( (1,\infty) \times \{0\} ) =1.
\end{equation*} This and \eqref{dimepsilon} together imply that 
\begin{equation}
\Dimh (\H(\gamma)) \leq \max_{\epsilon \in \{0,1\}^d} \left\{\Dimh (\H_\epsilon) \right\} \leq (d+1-\gamma) \vee d, \quad \text{a.s.}
\end{equation} This establish the desired upper bound. 
\end{proof}

We now prove the lower bound in Theorem \ref{thmsptime}, using Proposition \ref{propgaussian} and Theroem \ref{thmheat}.

\begin{proof}[Proof of the lower bound in Theorem \ref{thmsptime}]
Choose and fix any $\gamma \in (0,d)$. Let $\gamma_1, \gamma_2$ be fixed constants such that 
\begin{equation}\label{gammaorder}
    0<\gamma< \gamma_1 <\gamma_2 < d.
\end{equation}
We fix $\theta \in (\frac{\gamma_2}{d} ,1)$ and $\delta \in (0, \theta - \frac{\gamma_2}{d})$. Take $\rho_0\in \left(0,1-\frac{\gamma_2}{d(\theta-\delta)(1-\rho_0)} \right)$. Define 
\begin{equation}
    \gamma_0 := \frac{\gamma_2}{d(\theta-\delta)(1-\rho_0)}.
\end{equation}
According to the notations in Section \ref{subsec3.2}, for any $x\in \mathcal{I}_n(\theta)$, we can find $\{x_i\}_{i=1}^{m(n)}$ such that $x_i \in \mathcal{I}_n(\delta) \cap Q(x,e^{n\theta})$ for all $i=1,...,m(n)$, and 
\begin{enumerate}
    \item[(B.1)] $\lVert x_k - x_l \rVert \geq e^{n\delta}$ whenever $1\leq k<l \leq m(n)$.
    \item[(B.2)] $c^{-1}e^{nd(\theta-\delta)} \leq m(n) \leq c e^{nd(\theta-\delta)}$ for some constant $c>1$ only depending on $d$. 
\end{enumerate} 
Note that for all sufficiently large integers $n\geq 1$,
\begin{equation}\label{gamma1}
    \sup_{t\in(e^n,e^{n+1}]}\left( \gamma \cdot \frac{\log t}{n}\right) <\gamma_1,
\end{equation}
and
\begin{equation}\label{gamma2}
    \gamma_2 > \frac{\gamma_2\log c}{nd(\theta-\delta)}+\gamma_1,
\end{equation} where $c$ is the constant in (B.2). Note that \eqref{gamma1} and \eqref{gamma2} hold for arbitrary choices of $\gamma, \gamma_1$, and $\gamma_2$ satisfying \eqref{gammaorder}, for all sufficiently large $n\geq 1 $.
With the similar procedure as in Section \ref{subsec3.2}, by (B.2) and \eqref{gamma2}, we have 
\begin{align*}
    &\sup_{t\in (e^n,e^{n+1}]} \max_{x\in \mathcal{I}_n(\theta)} \P \left\{ \max_{\{x_i\}_{i=1}^{m(n)} \in \mathcal{I}_n(\delta) \cap Q(x,e^{n\theta})}Z^H( g^{-1}(\log t ), x_i) \leq \sqrt{2v^H(g^{-1}(\log t ))\gamma \log t} \right\}\\
    &\leq \sup_{t\in (e^n,e^{n+1}]} \max_{x\in \mathcal{I}_n(\theta)} \P \left\{\max_{\{x_i\}_{i=1}^{m(n)} \in \mathcal{I}_n(\delta) \cap Q(x,e^{n\theta})}Z^H( g^{-1}(\log t ), x_i) \leq \sqrt{2v^H(g^{-1}(\log t ))\gamma_1 n}  \right\}\\
    &\leq \sup_{t\in (e^n,e^{n+1}]} \max_{x\in \mathcal{I}_n(\theta)} \P \left\{\max_{\{x_i\}_{i=1}^{m(n)} \in \mathcal{I}_n(\delta) \cap Q(x,e^{n\theta})}Z^H( g^{-1}(\log t ), x_i) \leq \sqrt{2v^H(g^{-1}(\log t ))\left(\gamma_2 n - \frac{\gamma_2 \log c }{d(\theta-\delta)} \right)}  \right\}\\
    &\leq \sup_{t\in (e^n,e^{n+1}]} \max_{x\in \mathcal{I}_n(\theta)} \P \left\{ \max_{1\leq i \leq m(n)} \frac{Z^H( g^{-1}(\log t ), x_i)}{\sqrt{v^H(g^{-1}(\log t ))}}\leq \sqrt{2\gamma_0(1-\rho_0) \log m(n) }\right\}\\
    &\leq C \exp \left(-\alpha_0 nd(\theta-\delta) \right) (nd(\theta-\delta))^{\frac{\beta_0-1}{2}},
\end{align*}
for all large integers $n\geq 1 $. The last inequality is justified by the condition \eqref{corrcondi2} and Proposition \ref{propgaussian}: If we set $\mathcal{K}(t,x) : = \Corr\left( Z^H(t,x) , Z^H(t,0)\right)$ then due to the stationarity we get 
\begin{align*}
    \sup_{t\in (e^n, e^{n+1}]} \sup_ {\lVert x-y\rVert \geq e^{nd(\theta-\delta)}}& \Corr\left( Z^H(g^{-1}(\log t), x),Z^H(g^{-1}(\log t), y)\right) \\&\leq C \cdot  \mathcal{K} ( g^{-1}(n), e^{nd(\theta-\delta)}) \rightarrow 0 \quad \text{as } n \rightarrow \infty,
\end{align*} where $C=C(a,d,\theta,\delta)>0$ is a fixed constant. Thus we can choose an arbitrary small $\rho_0>0$ which satisfies
\begin{equation}\label{rhocondi}
\sup_{t\in (e^n, e^{n+1}]} \sup_ {\lVert x-y\rVert \geq e^{nd(\theta-\delta)}} \Corr\left( Z^H(g^{-1}(\log t), x),Z^H(g^{-1}(\log t), y)\right) < \rho_0,
\end{equation} for all sufficiently large $n\geq 1$.

Let 
$$
\mathcal{H}_0(\gamma) : = \mathcal{H}(\gamma)\cap \bigcup_{n=0}^\infty \left( e^n , e^{n+1}\right]^{d+1}.
$$
Then with Proposition \ref{propgaussian} and \eqref{rhocondi}, we can deduce that, for all large $n\geq 1$ there exists a constant $C:=C(d, \theta, \delta)>0$ such that 
\begin{align*}
    \P&\left\{  \mathcal{H}_0(\gamma) \cap \left[\{ t\} \times Q(x,e^{n\theta}) \right]= \varnothing \text{ for some }t \in \Z \cap (e^n , e^{n+1}] \text{ and }x \in \mathcal{I}_n(\theta)\right\}\\
    &\leq\P \left\{ \min_{\substack{t\in \Z\\t\in (e^n,e^{n+1}]}} \min_{x\in \mathcal{I}_n(\theta)} \max_{\{x_i\}_{i=1}^{m(n)} \in \mathcal{I}_n(\delta) \cap Q(x,e^{n\theta})}\frac{Z^H( g^{-1}(\log t ), x_i)}{\sqrt{v^H(g^{-1}(\log t ))}}\leq \sqrt{2\gamma_0(1-\rho_0) \log m(n) }\right\}\\
    &\leq C \cdot \exp\left\{nd(1- \theta) + n -\alpha_0 nd(\theta-\delta)\right\}\left(nd(\theta-\delta) \right)^{\frac{\beta_0-1}{2}}.
\end{align*}
As in Section \ref{subsec3.2}, we get $\alpha_0 \rightarrow \infty $ as $\rho_0 \rightarrow 0$. With sufficiently small $\rho_0$,
the Borel-Cantelli lemma implies the following:
\begin{equation}\label{assertion}
  \mathcal{H}_0(\gamma) \cap \left[\{ t\} \times Q(x,e^{n\theta}) \right]\neq \varnothing \text{ for all }t \in \Z \cap (e^n , e^{n+1}] \text{ and }x \in \mathcal{I}_n(\theta)
\end{equation}
for all sufficiently large integers $n\geq 1 $, almost surely. The remaining things are very similar to the proof of \cite[Theorem 3.10]{KKX2}. But since we deal with a general dimension $d\geq1$, we present the remaining part of the proof for the sake of completeness, following the proof of \cite[Theorem 3.10]{KKX2}.

Let us set 
\begin{equation*}
A_{\Vec{j},n}(\theta) := (a_{j_1,n}(\theta) , a_{j_1+1,n}(\theta)] \times \cdots \times (a_{j_d,n}(\theta) , a_{j_d+1,n}(\theta)], 
\end{equation*} and
\begin{equation*}
\eta_{\vj,n}(t) := \inf\left\{ x\in A_{\vj,n}(\theta) \, :\, (t,x) \in \H_0(\gamma) \right\}\footnote{For a set $E\in\R^d$, we define $\inf(A) := x_0$ where $x_0$ satisfies $\min_{x\in A} \lVert x\rVert = x_0$. We also define $\inf(\varnothing) := \infty$. },
\end{equation*}  for every $\vj= (j_1,...,j_d) \in [0, e^{n(1-\theta)})^d \cap \Z^d$, all integers $t\in (e^n, e^{n+1}]$ and sufficiently large $n\geq 1$.  Note that \eqref{assertion} guarantees that $\eta_{\vj,n}(t)$ is a well-defined random variable. Now we define a random Borel measure $\mu$ on $(1,\infty) \times \R^d$ on each shell $S_n$ as follows: For all integers $n \geq 0$ and all Borel sets $A \subset (1,\infty) \times \R^d$,
\begin{equation*}
\mu(A \cap S_n) := \sum_{\substack{t\in \Z\\t\in (e^n,e^{n+1}]}}  \sum_{\substack{\vj\in \Z^d\\\vj\in [0,e^{n(1-\theta)})^d}} {\mathds{1}}_A(t, \eta_{\vj,n}(t) ),
\end{equation*} where ${\mathds{1}}_A(t,x)$ denotes the indicator function for $(t,x) \in (1,\infty)\times \R^d$. One can observe that \eqref{assertion} implies 
\begin{equation}\label{mulower}
\mu(S_n) \geq \frac{1}{2} e^{nd(1-\theta)+n}, \quad \text{a.s.,}
\end{equation} for all large integers $n\geq 1$. 

Now we fix a large integer $n$ such that \eqref{mulower} holds. Choose and fix an arbitrary integer $s\in (e^n, e^{n+1}], x \in \R^d$ and $r\geq1$ which satisfy $Q(x,r) \subset (e^n,e^{n+1}]^d$. There are two cases: (i) $r \leq e^{n\theta}$ and (ii) $e^{n\theta} \leq r \leq e^{n+1} -e^n $. We deal with two cases separately.

For the case (i), it is easy to see that there are at most $2^d$ vectors $\vj$ such that $\eta_{\vj,n}(s) \in Q(x,r)$. Therefore, $\mu( \{s\} \times Q(x,r) ) \leq 2^d \leq 2^d r^a$ for every $a>0$. Summing over all integers $s \in (t,t+r]$, we have
\begin{equation}\label{eq_r1}
\mu((t,t+r] \times Q(x,r)) \leq 2^d r^{1+a},
\end{equation} provided that $(t,t+r] \times Q(x,r) \subset S_n$ and $a>0$.

For the case (ii),  there are at most $ (1+re^{-n\theta})^d\leq (2r)^d e^{-nd\theta}$ vectors $\vj$ such that $\eta_{\vj,n}(s) \in Q(x,r)$. Therefore, for every integers $s\in(t,t+r]$ and all cubes $Q(x,r)\subset (e^n,e^{n+1}]^d$, we have 
\begin{align*}
\mu(\{s\}\times Q(x,r) )& \leq 2^d r^d e^{-nd\theta} \leq 2^d e^{-nd\theta} r^a \sup_{e^{n\theta} < r \leq e^{n+1}-e^n} r^{d-a}\\
&\leq 2^d e^{n(d-a-d\theta)} (e-1)^{d-a} r^a,
\end{align*} for every $a>0$. In particular, we set $a:=d-d\theta$ and sum the last inequality over all integers $s\in (t,t+r]$ to see
\begin{equation}\label{eq_r2}
\mu((t,t+r]\times Q(x,r) ) \leq 2^d (e-1)^{d\theta} r^{d+1-d\theta},
\end{equation} whenever $(t,t+r] \times Q(x,r) \subset S_n$. From \eqref{eq_r1} and \eqref{eq_r2}, in both cases (i) and (ii), we have 
\begin{equation}\label{eq_r3}
\frac{\mu((t,t+r] \times Q(x,r) )}{r^{d+1}} \leq 2^d (e-1)^d=:c_d, 
\end{equation} for all cubes $(t,t+r] \times Q(x,r) \subset S_n$ and $r\geq 1 $ (we also used the fact that $0<\theta<1$). \eqref{eq_r3} and the density theorem of Barlow and Taylor \cite[Theorem 4.1]{BT2}, we have 
\begin{equation*}
\nu_{d+1-d\theta}^n(\H_0(\gamma)) \geq c_d^{-1} e^{-n(d+1-d\theta)} \mu(S_n) \geq 2^{-1} c_d^{-1}, \quad \text{a.s.,}
\end{equation*} for all sufficiently large integers $n\geq1$. The last inequality is justified by \eqref{mulower}. We sum over $n$ to obtain 
\begin{equation*}
\sum_{n=1}^\infty \nu_{d+1-d\theta}^n(\H_0(\gamma)) = \infty, \quad \text{a.s.}
\end{equation*} Recall that $\theta \in (\gamma_2/d ,1)$, $\delta\in(0,\theta-\gamma_2/d)$ and \eqref{gammaorder}, i.e., $0<\gamma<\gamma_1<\gamma_2<d$. With satisfying this order of constants, we let $\delta \searrow 0 $ and $\theta \searrow \gamma_2/d$. Then we have 
\begin{equation*}
\Dimh(\H(\gamma))\geq \Dimh(\H_0(\gamma)) \geq d+1-\gamma_2, \quad \text{a.s.}
\end{equation*} Since we can take $\gamma_1$ and $\gamma_2$ arbitrary close to $\gamma$ (see \eqref{gamma1} and \eqref{gamma2}) without violating $\gamma_1>\gamma_2$, we obtain that
\begin{equation}\label{eq_dimlower1}
\Dimh(\H_0(\gamma)) \geq d+1-\gamma, \quad \text{a.s.}
\end{equation} 

Now it remains to verify that 
\begin{equation}\label{eq_dimlower2}
\Dimh(\H(\gamma)) \geq d, \quad \text{a.s.}
\end{equation}  Let us choose and fix a real $t_0>e$ such that $g^{-1}(\log t_0) >0$. We claim that 
\begin{equation}\label{eq_dimlower3}
\Dimh \left\{\lVert x \rVert \geq M \,:\, \frac{Z^H(g^{-1}(\log t_0),x)} {v^H(g^{-1}(\log t_0))}   \geq \sqrt{2\gamma \log t_0}   \right\} =d, \quad \text{a.s.,}
\end{equation} for every $\gamma>0$. Let $\gamma_0$ be a fixed real constant in $(0,d)$. Note that the value of the left hand side in \eqref{eq_dimlower3} is independent of $M$ by the definition of the macroscopic Hausdorff dimension. With this in mind, choose large $M>0$ so that $\gamma_0 \log M \geq \gamma$. Then we have 
\begin{align*}
\Dimh &\left\{\lVert x \rVert \geq M \,:\, \frac{Z^H(g^{-1}(\log t_0),x)} {v^H(g^{-1}(\log t_0))}   \geq \sqrt{2\gamma \log t_0}   \right\} \\
&\geq\Dimh \left\{\lVert x \rVert \geq M \,:\, \frac{Z^H(g^{-1}(\log t_0),x)} {v^H(g^{-1}(\log t_0))}   \geq \sqrt{2\gamma_0 \log M}   \right\} \\
&\geq\Dimh \left\{\lVert x \rVert \geq M \,:\, \frac{Z^H(g^{-1}(\log t_0),x)} {v^H(g^{-1}(\log t_0))}   \geq \sqrt{2\gamma_0 \log \lVert x \rVert }   \right\} \\
&\geq d-\gamma_0, \quad \text{a.s.,}
\end{align*} by Theorem \ref{thmheat}. Therefore, as $\gamma_0 \searrow 0$, we get \eqref{eq_dimlower3}. Moreover, \eqref{eq_dimlower3} easily implies \eqref{eq_dimlower2} (see \cite[Remark 3.12]{KKX2}), which completes the proof.

\end{proof}

\section*{Acknowledgments} The author would like to thank Professor Kunwoo Kim for his helpful comments and continued support. 

\begin{spacing}{1}
\begin{small}
\end{small}\end{spacing}
\vskip.1in

\begin{small}
\noindent\textbf{Jaeyun Yi}\\
\noindent Department of Mathematics, Pohang University of Science and Technology (POSTECH), Pohang, Gyeongbuk, Korea 37673\\
\noindent \emph{Email address}: \texttt{stork@postech.ac.kr}
\end{small}

\end{document}